\theoremstyle{plain}
\theoremstyle{definition}
\newtheorem{theorem}{Theorem}[section]
\newtheorem{lemma}[theorem]{Lemma}
\newtheorem{corollary}[theorem]{Corollary}
\newtheorem{definition}[theorem]{Definition}
\newtheorem{counterexample}[theorem]{Counterexample}
\newtheorem{note}[theorem]{Note}
\newtheorem{convention}[theorem]{Convention}
\newtheorem{remark}[theorem]{Remark}
\theoremstyle{remark}
\numberwithin{equation}{section}
\newcommand{\SP}{\: \: \: \: \:}
\title[Li--Yorke and Devaney chaotic weighted generalized shifts]{Li--Yorke and Devaney chaotic uniform dynamical systems
amongst weighted shifts}
\author[F. Ayatollah Zadeh Shirazi, E. Hakimi, A. Hosseini, R. Rezavand]{Fatemah Ayatollah Zadeh Shirazi, Elaheh Hakimi, \\ Arezoo Hosseini, Reza Rezavand}
\begin{document}
\begin{abstract}
In this paper, for finite discrete field $F$, nonempty set
$\Gamma$, weight vector $\mathfrak{w}=({\mathfrak
w}_\alpha)_{\alpha\in\Gamma}\in F^\Gamma$ and weighted
generalized shift $\sigma_{\varphi,{\mathfrak w}}:F^\Gamma\to
F^\Gamma$, we find necessary and sufficient conditions for
uniform dynamical system $(F^\Gamma,\sigma_{\varphi,{\mathfrak
w}})$ to be Li--Yorke chaotic. Next we find necessary and
sufficient conditions for $(F^\Gamma,\sigma_{\varphi,{\mathfrak
w}})$ to be Devaney chaotic.
\end{abstract}
\maketitle
\noindent {\small {\bf 2020 Mathematics Subject Classification:}  37B02, 54H15 \\
{\bf Keywords:}} Devaney chaotic, Li--Yorke chaotic, sensitive,
topological transitive, weighted generalized shift.
\section{Introduction}
\noindent By a dynamical system $(X,f)$ we mean a topological
space $X$ and a continuous map $f:X\to X$. The family of all
metric dynamical systems is a subclass of the family of all
uniform dynamical systems which is in its own turn a subclass of
the family of all uniform transformation semigroups. Hence it is
natural for ideas employed in metric  dynamical systems to be
extended to uniform  dynamical systems or for ideas adopted in
uniform transformation semigroups to be restricted to uniform
dynamical systems. Amongst several properties introduced for
uniform transformation semigroups we want to make focus on
Devaney and Li--Yorke chaos. In this text we study Devaney and
Li--Yorke chaos in a sub--class of uniform dynamical systems
``weighted generalized shifts''.
\\
In classical form, for compact metric space $(X,d)$, we call the dynamical system $(X,f)$  ``Devaney'' chaotic if
it has the following properties~\cite{de, wang}:
\begin{itemize}
\item DPP (Dense Periodic Points). $Per(f)\:(=\{x\in X:\exists n\geq1\: f^n(x)=x\})$ is a dense subset of $X$,
\item TT (Topological Transitive). For all nonempty open subsets $U,V$ of $X$ there exists $n\geq1$ such that $f^n(U)\cap V\neq\varnothing$,
\item SIC (Sensitive to Initial Conditions). There exists $\delta>0$ such that for all $x\in X$ and open neighbourhood $U$ of $x$ there exists $y\in U$ and $n\geq0$ such that $d(f^n(x),f^n(y))>\delta$.
\end{itemize}
Several authors have tried to extend the above concept of Devaney chaos to uniform dynamical systems$\setminus$transformation (semi-)groups by redefining (see e.g.~\cite{baz, jaleb}):
\begin{itemize}
\item sensitivity in uniform dynamical systems$\setminus$transformation (semi-)groups and
\item periodic points in uniform transformation (semi-)groups.
\end{itemize}
Let us see what about the other well--known chaos, ``Li--Yorke''.
In traditional form a metric dynamical system is Li--Yorke chaotic
if it contains an uncountable scrambled
subset~\cite{li, li-sen},
also one can extend the above concept of a Li--Yorke chaotic
dynamical system to Li--Yorke chaotic transformation (semi-)groups with special considerations on
\\ $\bullet$ phase (semi-)group (for finitely generated case of phase semigroup see \cite{chu}), or
\\ $\bullet$ phase space (uniform phase space)
\\
or even both of the above evaluations. One may consider even Li--Yorke chaotic uniform transformation semigroups modulo
an ideal~\cite{khodam}.
\subsection*{Background on uniform spaces} For collection $\mathcal F$ of subsets of $X\times X$, we say $\mathcal F$
is a uniform structure on $X$ if (let $\Delta_X:=\{(x,x):x\in X\}$):
\begin{itemize}
\item $\forall\mathcal{O}\in\mathcal{F}\:\:\Delta_X\subseteq\mathcal{O}$,
\item $\forall\mathcal{O},\mathcal{U}\in\mathcal{F}\:\:\mathcal{O}\cap\mathcal{U}\in\mathcal{F}$,
\item $\forall\mathcal{O}\in\mathcal{F}\:\:\forall\mathcal{U}\subseteq X\times X\:\:(\mathcal{O}\subseteq\mathcal{U}\Rightarrow\mathcal{U}\in\mathcal{F})$,
\item $\forall\mathcal{O}\in\mathcal{F}\:\:{\mathcal O}^{-1}\in \mathcal{F}$,
\item $\forall\mathcal{O}\in\mathcal{F}\:\:\exists\mathcal{U}\in\mathcal{F}\:\:\mathcal{U}\circ\mathcal{U}\subseteq \mathcal{O}$.
\end{itemize}
Then $\{D\subseteq X:\forall x\in
X\:\:\exists\mathcal{O}\in\mathcal{F}\:\:\mathcal{O}[x]\subseteq
D\}$ where $\mathcal{O}[x]:=\{y\in X:(x,y)\in\mathcal{O}\}$ (for
$\mathcal{O}\in\mathcal{F}$ and $x\in X$) is a topology on $X$
called uniform topology on $X$ induced by $\mathcal F$. We say
topological space $Y$ is uniformizable if there exists a uniform
structure $\mathcal{H}$ on $Y$ such that uniform topology induced
by $\mathcal H$ on $Y$ is compatible with original topology on
$Y$, moreover in the above case we say $\mathcal H$ is a
compatible uniform structure on $Y$. Let's recall that compact
Hausdorff topological space $Z$ has a unique compatible uniform
structure $\{\mathcal{O}\subseteq Z\times Z:\mathcal{O}$ contains
an open neighbourhood of $\Delta_Z\}$. For details on uniform
spaces see \cite{dug}.
\subsection*{Background on (weighted) generalized shifts}
\noindent One--sided and two--sided shifts are amongst most
useful tools in different areas including ergodic theory and
dynamical systems which have been studied and developed earlier
than 60 years ago, one may find their first motivations in
Kolomogorov, Sinai, Ornstein and other mathematicains'
works~\cite{casa, ornstein, walters}. One may find different
generalizations of the above shifts regarding their point of
view, e.g. as it has been mentioned in~2006~in~\cite{Marchenko}
``\textit{Between 1938 and 1940 Jean Delsarte and B. M. Levitan
developed the theory of generalized shift operators $T_x^y[f]$,
which map functions $f(x)$ into functions of two variables
$T_x^y[f]$ and satisfy four axioms that generalized the
properties of ordinary shift. Among these axioms ...}''. However
in many works, special case of the above definition, i.e., $p$
times iterated two--sided shift have been considered as
generalized shifts too~\cite{calcul}. In the following text we
will use none of the above two generalizations, we use point of
view, notation and definition of generalized shift established and
introduced for the first time in~\cite{note} (again as a
generalization of one--sided  and two--sided shift) which has
been appeared in dynamical and non--dynamical papers
(e.g.~\cite{abad, anna-b}): let's recall that for arbitrary
nonempty sets $X,\Gamma$ and $\varphi:\Gamma\to\Gamma$, we call
$\sigma_\varphi:X^\Gamma\to X^\Gamma$ with
$\sigma_\varphi((x_\alpha)_{\alpha\in\Gamma})=(x_{\varphi(\alpha)}
)_{\alpha\in\Gamma}$ for $(x_\alpha)_{\alpha\in\Gamma}\in
X^\Gamma$, a generalized shift~\cite{note}.
\begin{definition}
Suppose $M$ is a module over ring $R$, $\Gamma$ is a nonempty set,
$\mathfrak{w}=({\mathfrak w}_\alpha)_{\alpha\in\Gamma}\in R^\Gamma$ is a ``weight vector'' and
$\varphi:\Gamma\to\Gamma$ is arbitrary, we call $\sigma_{\varphi,\mathfrak{w}}:M^\Gamma\to M^\Gamma$
with $\sigma_{\varphi,\mathfrak{w}}((x_\alpha)_{\alpha\in\Gamma})=(\mathfrak{w}_\alpha x_{\varphi(\alpha)})_{\alpha
\in\Gamma}$ for $(x_\alpha)_{\alpha\in\Gamma}\in M^\Gamma$ a weighted generalized shift. Note that for topological
module $M$, weighted generalized shift $\sigma_{\varphi,\mathfrak{w}}:M^\Gamma\to M^\Gamma$ is continuous, where
$M^\Gamma$ has been equipped with product topology.
\end{definition}
\noindent Weighted generalized shifts in some point of view are
just weighted composition operators~\cite{comp} which are main
interest of many mathematicians, however in the above point of
view, they have  been introduced for the first time
in~\cite{fweight} as a common generalization of ``generalized
shifts'' and ``weighted shifts'',  also study over this concept
has been continued in~\cite{weight}.
\begin{convention}
In the following text consider discrete finite abelian ring $R$ with unity $1$ and zero element $0$,
arbitrary set $\Gamma$ with at least two elements, self--map $\varphi:\Gamma\to\Gamma$ and weight vector
$\mathfrak{w}=({\mathfrak w}_\alpha)_{\alpha\in\Gamma}\in R^\Gamma$. Moreover equip
$R^\Gamma$ with product topology and
(unique) compatible uniform structure
\begin{center}
$\mathcal{K}=\{\mathcal{O}\subseteq R^\Gamma\times R^\Gamma:$
there exists finite subset $M$ of $\Gamma$ with $\mathcal{O}_M\subseteq\mathcal{O}\}$
\end{center}
where for $M\subseteq \Gamma$, we have
\[\mathcal{O}_M:=\{((x_\alpha)_{\alpha\in\Gamma},(y_\alpha)_{\alpha\in\Gamma})
\in R^\Gamma\times R^\Gamma:\forall\alpha\in
M\:\:(x_\alpha=y_\alpha)\}\:. \] For $\theta\in\Gamma$ and $r\in
R$, let $U(\theta,r)=\mathop{\prod}_{\alpha\in\Gamma}U_\alpha$
where $U_\theta=\{r\}$ and $U_\alpha=R$ for $\alpha\neq\theta$.
So $\{U(\theta,r):\theta\in\Gamma,r\in R\}$ is a sub--base of
product topology on $R^\Gamma$, note that for $r_1,\ldots,r_n\in
R$ and distinct $\alpha_1,\ldots,\alpha_n\in\Gamma$, we have
$\mathop{\bigcap}\limits_{1\leq i\leq
n}U(\alpha_i,r_i)=\mathop{\prod}\limits_{\alpha\in\Gamma}V_\alpha$
for $V_{\alpha_i}=\{r_i\}$ ($1\leq i\leq n$) and $V_\alpha=R$ for
$\alpha\neq\alpha_1,\ldots,\alpha_n$.
\end{convention}
\section{Sensitivity in $(R^\Gamma,\sigma_{\varphi,{\mathfrak w}})$}
\noindent In metric space $(X,d)$ for $\varepsilon>0$, let
$\alpha_\varepsilon:=\{(x,y)\in X\times X:d(x,y)<\varepsilon\}$,
then $\mathcal{F}_d:=\{\mathcal{O}\subseteq X\times
X:\exists\varepsilon>0\:\:\alpha_\varepsilon\subseteq\mathcal{O}\}$
is a compatible uniform structure on $X$ and it is easy to see
that $(X,f)$ is sensitive if and only if there exists
$\mathcal{O}\in {\mathcal F}_d$ such that for each $x\in X$ and
open neighbourhood $U$ of $x$, there exists $y\in U$ and $n\geq0$
such that $(f^n(x),f^n(y))\notin\mathcal{O}$. Note that
sensitivity of $(X,f)$ depends on chosen compatible metric $d$ on
$X$ \cite{fed}. This comparison leads us to the following
definition:
\begin{definition}
In uniform space $(X,\mathcal F)$ we say dynamical system
$(X,f)$ is
\begin{itemize}
\item sensitive if there exists entourage $\mathcal{O}\in\mathcal{F}$, such that for all $x\in X$ and all open
    neighbourhood $U$ of $x$, there exists $y\in U$ and $n\geq0$ with
    \linebreak $(f^n(x),f^n(y))\notin\mathcal{O}$~\cite[Definition 3]{wu} (see~\cite[Definition 4.2]{jaleb} too),
\item strongly sensitive if there exists entourage $\mathcal{O}\in\mathcal{F}$, such that for all $x\in X$ and all open
    neighbourhood $U$ of $x$, there exists $y\in U$ and $N\geq0$ with $(f^n(x),f^n(y))\notin\mathcal{O}$ for all $n\geq N$
    (see~\cite[Definition 3.1]{abraham} too).
\end{itemize}
\end{definition}
\noindent Note that sensitivity of $(X,f)$ depends on chosen
compatible uniformity of $X$. However since every compact
Hausdorff space has a unique compatible uniform structure, so in
this case no need to specify compatible uniform structure on
compact Hausdorff space $X$.
\\
In this section we prove that for finite field $R$,
$(R^\Gamma,\sigma_{\varphi,\mathfrak{w}})$ is (strongly) sensitive if and only if
there exists non--quasi--periodic point $\theta\in\Gamma$ such that for all $n\geq0$,
$\mathfrak{w}_{\varphi^n(\theta)}\neq0$.
\begin{remark}
For self--map $h:A\to A$, we say $a\in A$ is a:
\\ $\bullet$ periodic point of $h$, if there exists $n\geq1$ such that $h^n(a)=a$,
\\ $\bullet$ quasi--periodic of $h$, if there exist $i>j\geq1$ such that $h^i(a)=h^j(a)$ (or equivalently
$\{h^n(a)\}_{n\geq1}$ is finite) (known as: quasi--periodic point~\cite[Definition 2.1]{abad}, eventually periodic point~\cite{nathan}, pre--periodic point~\cite{ben} too),
\\ $\bullet$ non--quasi--periodic of $h$, if it is not a quasi--periodic point of $h$, or equivalently
$\{h^n(a)\}_{n\geq1}$ is infinite (known as wandering point too~\cite[Definition 2.1]{abad}).
\end{remark}
\begin{lemma}\label{salam-sen-10}
If for all $\alpha\in\Gamma$:
\\
$\bullet$ either $\alpha$ is a quasi--periodic point of $\varphi$,
\\
$\bullet$ or $\alpha$ is a non--quasi--periodic point of $\varphi$ and there exists $n\geq0$ with ${\mathfrak w}_{\varphi^n(\alpha)}=0$,
\\
then $(R^\Gamma,\sigma_{\varphi,\mathfrak{w}})$ is not sensitive.
\end{lemma}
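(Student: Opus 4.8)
The plan is to show that under the stated hypothesis, for \emph{every} entourage $\mathcal O\in\mathcal K$ there exists a point $x\in R^\Gamma$ and a neighbourhood $U$ of $x$ such that $(\sigma_{\varphi,\mathfrak w}^n(x),\sigma_{\varphi,\mathfrak w}^n(y))\in\mathcal O$ for all $y\in U$ and all $n\geq0$; this negates the definition of sensitivity directly. Since the entourages $\mathcal O_M$ with $M\subseteq\Gamma$ finite form a base for $\mathcal K$, it suffices to handle $\mathcal O=\mathcal O_M$ for an arbitrary finite set $M=\{\alpha_1,\dots,\alpha_k\}\subseteq\Gamma$. The key observation is that the $\beta$-coordinate of $\sigma_{\varphi,\mathfrak w}^n(x)$ is $\mathfrak w_\beta\,\mathfrak w_{\varphi(\beta)}\cdots\mathfrak w_{\varphi^{n-1}(\beta)}\,x_{\varphi^n(\beta)}$ (an appropriate empty product for $n=0$), so two points $x,y$ that agree on a set $S\subseteq\Gamma$ will have $\sigma_{\varphi,\mathfrak w}^n(x)$ and $\sigma_{\varphi,\mathfrak w}^n(y)$ agreeing on every $\beta$ with $\varphi^n(\beta)\in S$, and in addition agreeing (both equal to $0$) on every $\beta$ for which some intermediate weight $\mathfrak w_{\varphi^j(\beta)}$ with $0\leq j<n$ vanishes.

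First I would, for each $\alpha_i\in M$, use the hypothesis to split into two cases. If $\alpha_i$ is quasi-periodic, then its forward orbit $\{\varphi^n(\alpha_i):n\geq0\}$ is finite; call this finite set $T_i$. If $\alpha_i$ is non-quasi-periodic, the hypothesis gives some $n_i\geq0$ with $\mathfrak w_{\varphi^{n_i}(\alpha_i)}=0$; let $T_i=\{\alpha_i,\varphi(\alpha_i),\dots,\varphi^{n_i}(\alpha_i)\}$, again finite. Set $S=\bigcup_{i=1}^k T_i$, a finite subset of $\Gamma$, and let $U=\{y\in R^\Gamma:\forall\alpha\in S\ (y_\alpha=x_\alpha)\}$ be the basic open neighbourhood of a fixed (say, zero) point $x$, so $(x,y)\in\mathcal O_S$ for all $y\in U$. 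I would then verify that for every $\beta\in M$ and every $n\geq0$, the $\beta$-coordinates of $\sigma_{\varphi,\mathfrak w}^n(x)$ and $\sigma_{\varphi,\mathfrak w}^n(y)$ coincide: if $\beta=\alpha_i$ is quasi-periodic then $\varphi^n(\beta)\in T_i\subseteq S$, so the coordinates agree because $x$ and $y$ agree on $S$; if $\beta=\alpha_i$ is non-quasi-periodic, then either $n\leq n_i$ and $\varphi^n(\beta)\in T_i\subseteq S$ (same argument), or $n>n_i$ and the product defining the $\beta$-coordinate of $\sigma_{\varphi,\mathfrak w}^n$ contains the factor $\mathfrak w_{\varphi^{n_i}(\beta)}=0$, forcing both coordinates to be $0$. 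Hence $(\sigma_{\varphi,\mathfrak w}^n(x),\sigma_{\varphi,\mathfrak w}^n(y))\in\mathcal O_M$ for all $n\geq0$ and all $y\in U$.

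Since this holds for every finite $M$ and the sets $\mathcal O_M$ are cofinal in $\mathcal K$, no entourage can serve as a witness of sensitivity, and therefore $(R^\Gamma,\sigma_{\varphi,\mathfrak w})$ is not sensitive. The only mildly delicate point is bookkeeping the iterated-weight product formula for $\sigma_{\varphi,\mathfrak w}^n$ and being careful with the degenerate case $n=0$ (where the system trivially behaves well since $\sigma_{\varphi,\mathfrak w}^0$ is the identity and $x,y$ agree on $S\supseteq M$); I expect the main obstacle is merely stating the coordinate formula for $\sigma_{\varphi,\mathfrak w}^n$ cleanly, which can be dispatched by a one-line induction on $n$, after which the case analysis above closes the argument.
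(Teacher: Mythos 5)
Your proposal is correct and follows essentially the same route as the paper's proof: the set $S=\bigcup_i T_i$ you build is exactly the paper's finite set $\Lambda$ (full forward orbits of quasi--periodic points of $M$, truncated orbits of non--quasi--periodic points up to the first vanishing weight), and your three-way case analysis on $\beta$ and $n$ is the paper's cases (a)--(c). Nothing to add.
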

\begin{proof}
Suppose for each $\alpha\in\Gamma$ either $\alpha$ is a quasi--periodic point of $\varphi$ or there exists $n\geq0$,
with ${\mathfrak w}_{\varphi^n(\alpha)}=0$.
Consider $\mathcal{O}\in\mathcal{K}$, there exists finite subset $M$ of $\Gamma$ with $\mathcal{O}_M\subseteq\mathcal{O}$.  \begin{center}
\begin{tabular}{cc}
$\Lambda:=\{\varphi^n(\alpha):n\geq0$ and $\alpha\in M$ is a quasi--periodic point of $\varphi\}\cup$ &
\\
$\{\varphi^n(\alpha):\alpha\in M$ is a non--quasi--periodic point of $\varphi$  & ($\divideontimes$)
\\
and $0\leq n\leq\min\{k\geq0:
\mathfrak{w}_{\varphi^k(\alpha)}=0\}\}$
&
 \\ \end{tabular}
\end{center}
then $\Lambda$ is a finite subset of $\Gamma$. Consider $(x_\alpha)_{\alpha\in\Gamma}\in R^\Gamma$, then
\[U=\{(y_\alpha)_{\alpha\in\Gamma}\in R^\Gamma:\forall\alpha\in \Lambda, \:y_\alpha=x_\alpha\}(=
\mathop{\bigcap}\limits_{\alpha\in \Lambda}U(\alpha,x_\alpha))\]
is an open neighbourhood of
$(x_\alpha)_{\alpha\in\Gamma}$. Consider $(y_\alpha)_{\alpha\in\Gamma}\in U$, for $\lambda\in\Lambda$
and $n\geq0$ we have the following cases:
\begin{itemize}
\item[a.] $\lambda\in \Lambda$ is a quasi--periodic point of $\varphi$. In this case
    $\{\varphi^i(\lambda):i\geq0\}\subseteq\Lambda$, thus $\varphi^n(\lambda)\in\Lambda$
    and $x_{\varphi^n(\lambda)}=y_{\varphi^n(\lambda)}$ which shows
    $\mathfrak{w}_\lambda\mathfrak{w}_{\varphi(\lambda)}\cdots\mathfrak{w}_{\varphi^{n-1}(\lambda)}x_{\varphi^n(\lambda)}
    =\mathfrak{w}_\lambda\mathfrak{w}_{\varphi(\lambda)}\cdots\mathfrak{w}_{\varphi^{n-1}(\lambda)}y_{\varphi^n(\lambda)}$.
\item[b.] $\lambda\in\Lambda$ is a non--quasi--periodic point of $\varphi$ and
    $0\leq n\leq\min\{k\geq0:\mathfrak{w}_{\varphi^k    (\lambda)}=0\}$.
    In this case $\varphi^n(\lambda)\in\Lambda$ and using a similar method described in (a) we have
    $\mathfrak{w}_\lambda\mathfrak{w}_{\varphi(\lambda)}\cdots\mathfrak{w}_{\varphi^{n-1}(\lambda)}x_{\varphi^n(\lambda)}
    =\mathfrak{w}_\lambda\mathfrak{w}_{\varphi(\lambda)}\cdots\mathfrak{w}_{\varphi^{n-1}(\lambda)}y_{\varphi^n(\lambda)}$.
\item[c.] $\lambda\in\Lambda$ is a non--quasi--periodic point of $\varphi$ and $n>\min\{k\geq0:\mathfrak{w}_{\varphi^k
    (\lambda)}=0\}$. In this case
    $\mathfrak{w}_\lambda\mathfrak{w}_{\varphi(\lambda)}\cdots\mathfrak{w}_{\varphi^{n-1}(\lambda)}x_{\varphi^n(\lambda)}
    =0=\mathfrak{w}_\lambda\mathfrak{w}_{\varphi(\lambda)}\cdots\mathfrak{w}_{\varphi^{n-1}(\lambda)}y_{\varphi^n(\lambda)}$.
\end{itemize}
Using the above cases:
\[\forall\lambda\in\Lambda\:\:\forall n\geq 0,\:\:(\mathfrak{w}_\lambda\mathfrak{w}_{\varphi(\lambda)}\cdots\mathfrak{w}_{\varphi^{n-1}(\lambda)}x_{\varphi^n(\lambda)}
    =\mathfrak{w}_\lambda\mathfrak{w}_{\varphi(\lambda)}\cdots\mathfrak{w}_{\varphi^{n-1}(\lambda)}y_{\varphi^n(\lambda)})\]
i.e., for all $n\geq0$ we have:
\\
$(\sigma^n_{\varphi,\mathfrak{w}}((x_\alpha)_{\alpha\in\Gamma}) , \sigma^n_{\varphi,\mathfrak{w}}((y_\alpha)_{\alpha\in\Gamma})) $
\begin{eqnarray*}
& = & ((\mathfrak{w}_\alpha\mathfrak{w}_{\varphi(\alpha)}\cdots\mathfrak{w}_{\varphi^{n-1}(\alpha)}x_{\varphi^n(\alpha)})_{\alpha\in\Gamma}, (\mathfrak{w}_\alpha\mathfrak{w}_{\varphi(\alpha)}\cdots\mathfrak{w}_{\varphi^{n-1}(\alpha)}y_{\varphi^n(\alpha)})_{\alpha\in\Gamma})\\
& \in & \mathcal{O}_\Lambda\subseteq\mathcal{O}_M\subseteq\mathcal{O}
\end{eqnarray*}
Therefore for all $\mathcal{O}\in\mathcal{K}$ and $x\in
R^\Gamma$, there exists open neighbourhood $U$ of $x$ such that
$(\sigma^n_{\varphi,\mathfrak{w}}(x) ,
\sigma^n_{\varphi,\mathfrak{w}}(y))\in\mathcal{O}$ and
$(R^\Gamma,\sigma_{\varphi,\mathfrak{w}})$ is not sensitive.
\end{proof}
\noindent Let's recall that we say $x\in R\setminus\{0\}$ is invertible if there exists $y\in R\setminus\{0\}$
such that $xy=yx=1$.
\begin{lemma}\label{salam-sen-20}
If there exists non--quasi--periodic point $\theta\in\Gamma$ such that for all $n\geq0$,
$\mathfrak{w}_{\varphi^n(\theta)}$ is invertible, then
$(R^\Gamma,\sigma_{\varphi,\mathfrak{w}})$ is (strongly) sensitive.
\end{lemma}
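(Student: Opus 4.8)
The plan is to produce a single entourage that witnesses \emph{strong} sensitivity; since strong sensitivity trivially implies sensitivity (take $n=N$), this disposes of both assertions at once. The natural candidate is $\mathcal{O}_{\{\theta\}}\in\mathcal{K}$, the entourage recording agreement in the single coordinate $\theta$: two elements of $R^\Gamma$ are $\mathcal{O}_{\{\theta\}}$-close precisely when their $\theta$-th coordinates coincide.

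First I would record the iterate formula already exploited in Lemma~\ref{salam-sen-10}, namely that the $\alpha$-coordinate of $\sigma^n_{\varphi,\mathfrak{w}}((x_\alpha)_{\alpha\in\Gamma})$ equals $\mathfrak{w}_\alpha\mathfrak{w}_{\varphi(\alpha)}\cdots\mathfrak{w}_{\varphi^{n-1}(\alpha)}x_{\varphi^n(\alpha)}$. Specializing to $\alpha=\theta$ and writing $c_n:=\mathfrak{w}_\theta\mathfrak{w}_{\varphi(\theta)}\cdots\mathfrak{w}_{\varphi^{n-1}(\theta)}$ (with $c_0=1$), the hypothesis that every $\mathfrak{w}_{\varphi^k(\theta)}$ is invertible makes each $c_n$ a product of units of $R$, hence itself invertible, and therefore cancellable: $c_n a\neq c_n b$ whenever $a\neq b$ in $R$.

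Next, fix an arbitrary $x=(x_\alpha)_{\alpha\in\Gamma}\in R^\Gamma$ and an arbitrary open neighbourhood $U$ of $x$; pass to a basic open sub-neighbourhood $U'=\bigcap_{\alpha\in F}U(\alpha,x_\alpha)\subseteq U$ with $F\subseteq\Gamma$ finite. Since $\theta$ is a non-quasi-periodic point of $\varphi$, the points $\theta,\varphi(\theta),\varphi^2(\theta),\ldots$ are pairwise distinct, so $\{n\geq0:\varphi^n(\theta)\in F\}$ is finite; choose $N$ strictly larger than all its members, so that $\varphi^n(\theta)\notin F$ for every $n\geq N$. Define $y=(y_\alpha)_{\alpha\in\Gamma}$ by $y_\alpha=x_\alpha$ for $\alpha\notin\{\varphi^n(\theta):n\geq N\}$, and for each $n\geq N$ let $y_{\varphi^n(\theta)}$ be any element of $R$ distinct from $x_{\varphi^n(\theta)}$ (possible because $R$ has at least the two distinct elements $0$ and $1$). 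This $y$ is well defined thanks to the distinctness of the $\varphi^n(\theta)$, and $y\in U'\subseteq U$ because $F$ is disjoint from $\{\varphi^n(\theta):n\geq N\}$.

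Finally, for every $n\geq N$ the $\theta$-coordinates of $\sigma^n_{\varphi,\mathfrak{w}}(x)$ and $\sigma^n_{\varphi,\mathfrak{w}}(y)$ are $c_n x_{\varphi^n(\theta)}$ and $c_n y_{\varphi^n(\theta)}$, which differ by cancellability of the invertible element $c_n$; hence $(\sigma^n_{\varphi,\mathfrak{w}}(x),\sigma^n_{\varphi,\mathfrak{w}}(y))\notin\mathcal{O}_{\{\theta\}}$ for all $n\geq N$, which is exactly strong sensitivity with the entourage $\mathcal{O}_{\{\theta\}}$. I do not expect a genuine obstacle; the only point requiring a little care is the bookkeeping ensuring that the perturbed coordinate set $\{\varphi^n(\theta):n\geq N\}$ is simultaneously made of distinct indices (so $y$ is well defined) and disjoint from $F$ (so $y$ remains in $U$), and both facts are immediate consequences of $\theta$ being non-quasi-periodic.
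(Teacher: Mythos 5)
Your proposal is correct and follows essentially the same route as the paper's own proof: the entourage $\mathcal{O}_{\{\theta\}}$, the choice of $N$ so that $\varphi^n(\theta)$ avoids the finite index set of the basic neighbourhood for $n\geq N$, the perturbation of $x$ on the tail coordinates $\{\varphi^n(\theta):n\geq N\}$, and cancellation by the invertible products $\mathfrak{w}_\theta\cdots\mathfrak{w}_{\varphi^{n-1}(\theta)}$. No gaps.
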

\begin{proof}
Suppose $\theta$ is a non--quasi--periodic point of $\varphi$ (i.e., $\{\varphi^n(\theta)\}_{n\geq0}$ is a
one--to--one sequence) such that for all $n\geq0$, $\mathfrak{w}_{\varphi^n(\theta)}$ is invertible. Consider
$x=(x_\alpha)_{\alpha\in\Gamma}\in R^\Gamma$ and open neighbourhood $U$ of $x$, there exists finite subset
$M$ of $\Gamma$ such that $x\in\mathop{\bigcap}\limits_{\alpha\in M}U(\alpha,x_\alpha)\subseteq U$.
Since $\{\varphi^n(\theta)\}_{n\geq0}$ is a
one--to--one sequence, there exists $N\geq1$ such that
\[\forall n\geq N\:\:(\varphi^n(\theta)\notin M)\:,\]
in particular
{\small \[\{(y_\alpha)_{\alpha\in\Gamma}\in R^\Gamma:\forall \alpha\notin\{\varphi^N(\theta),\varphi^{N+1}(\theta),\ldots\}\:\:(x_\alpha=y_\alpha)\}\subseteq
\mathop{\bigcap}\limits_{\alpha\in M}U(\alpha,x_\alpha)\subseteq U\:.\tag{$\divideontimes\divideontimes$}\]}
Choose $z=(z_\alpha)_{\alpha\in\Gamma}\in R^\Gamma$ such that:
\[z_\alpha=\left\{\begin{array}{lc} an\: element\: of\: R\setminus\{x_{\varphi^m(\theta)}\} & m\geq N,\alpha=\varphi^m(\theta)\:, \\
x_\alpha &
\alpha\notin\{\varphi^N(\theta),\varphi^{N+1}(\theta),\varphi^{N+2}(\theta),\ldots\}\:,
\end{array}\right.\] then by ($\divideontimes\divideontimes$),
$z\in U$. Moreover for all $m\geq N$ we have:
\vspace{5mm} \\
$z_{\varphi^m(\theta)}\in R\setminus\{x_{\varphi^m(\theta)}\}$
\begin{eqnarray*}
 & \Rightarrow & z_{\varphi^m(\theta)}\neq x_{\varphi^m(\theta)} \\
& \Rightarrow &
    \mathfrak{w}_\theta\mathfrak{w}_{\varphi(\theta)}\cdots\mathfrak{w}_{\varphi^{m-1}(\theta)} z_{\varphi^m(\theta)}
    \neq \mathfrak{w}_\theta\mathfrak{w}_{\varphi(\theta)}\cdots\mathfrak{w}_{\varphi^{m-1}(\theta)} x_{\varphi^m(\theta)} \\
&& \SP\SP\SP\SP\SP\SP\SP\SP\SP\SP\SP\SP\SP\SP\SP
    (since\:{\mathfrak w}_{\varphi^i(\theta)}s\: are \: invertible) \\
& \Rightarrow & ((\mathfrak{w}_\alpha\mathfrak{w}_{\varphi(\alpha)}\cdots\mathfrak{w}_{\varphi^{m-1}(\alpha)}
    z_{\varphi^m(\alpha)})_{\alpha\in\Gamma},(\mathfrak{w}_\alpha\mathfrak{w}_{\varphi(\alpha)}\cdots
    \mathfrak{w}_{\varphi^{m-1}(\alpha)}x_{\varphi^m(\alpha)})_{\alpha\in\Gamma})\notin\mathcal{O}_{\{\theta\}} \\
& \Rightarrow & (\sigma^m_{\varphi,\mathfrak{w}}(z),\sigma^m_{\varphi,\mathfrak{w}}(x))\notin\mathcal{O}_{\{\theta\}}
\end{eqnarray*}
Hence for all $x\in R^\Gamma$ and open neighborhood $U$ of $x$,
there exists $z\in U$ and $N\geq1$ such that for all $m\geq N$,
$(\sigma^m_{\varphi,\mathfrak{w}}(z),\sigma^m_{\varphi,\mathfrak{w}}(x))\notin\mathcal{O}_{\{\theta\}}
(\in\mathcal{K})$, and $(R^\Gamma,\sigma_{\varphi,\mathfrak{w}})$
is (strongly) sensitive.
\end{proof}
\begin{corollary}\label{salam-sen-30}
In finite field $R$ the following statements are equivalent:
\begin{itemize}
\item[a.] $(R^\Gamma,\sigma_{\varphi,\mathfrak{w}})$ is sensitive,
\item[b.] $(R^\Gamma,\sigma_{\varphi,\mathfrak{w}})$ is strongly sensitive,
\item[c.] there exists non--quasi--periodic point $\theta\in\Gamma$ such that for all $n\geq0$,
$\mathfrak{w}_{\varphi^n(\theta)}\neq0$.
\end{itemize}
\end{corollary}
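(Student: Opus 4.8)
The plan is to close a short cycle of implications $(b)\Rightarrow(a)\Rightarrow(c)\Rightarrow(b)$, leaning entirely on Lemmas~\ref{salam-sen-10} and~\ref{salam-sen-20} together with the one structural fact that distinguishes a finite field from a general finite ring: every nonzero element of $R$ is invertible.

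First, $(b)\Rightarrow(a)$ is immediate from the definitions: strong sensitivity provides an entourage $\mathcal{O}\in\mathcal{F}$ such that for each $x$ and each open neighbourhood $U$ of $x$ there are $y\in U$ and $N\geq0$ with $(\sigma^n_{\varphi,\mathfrak{w}}(x),\sigma^n_{\varphi,\mathfrak{w}}(y))\notin\mathcal{O}$ for every $n\geq N$; taking $n=N$ witnesses ordinary sensitivity. Next, for $(c)\Rightarrow(b)$ I would simply invoke Lemma~\ref{salam-sen-20}: if $\theta\in\Gamma$ is non-quasi-periodic and $\mathfrak{w}_{\varphi^n(\theta)}\neq0$ for all $n\geq0$, then, $R$ being a field, each $\mathfrak{w}_{\varphi^n(\theta)}$ is invertible, so the hypothesis of Lemma~\ref{salam-sen-20} is satisfied and $(R^\Gamma,\sigma_{\varphi,\mathfrak{w}})$ is strongly sensitive.

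Finally, $(a)\Rightarrow(c)$, which I would prove in contrapositive form. Assume $(c)$ fails; negating the statement, for every $\alpha\in\Gamma$ either $\alpha$ is quasi-periodic for $\varphi$, or $\alpha$ is non-quasi-periodic and there is some $n\geq0$ with $\mathfrak{w}_{\varphi^n(\alpha)}=0$. This is precisely the dichotomy hypothesis of Lemma~\ref{salam-sen-10}, whose conclusion is that $(R^\Gamma,\sigma_{\varphi,\mathfrak{w}})$ is not sensitive, i.e.\ $\neg(a)$. Chaining the three implications gives the asserted equivalence. I do not expect any genuine obstacle here; the only place where "$R$ a field" is used rather than merely "$R$ a finite ring" is the passage from $\mathfrak{w}_{\varphi^n(\theta)}\neq0$ to invertibility in the step $(c)\Rightarrow(b)$, and it is exactly this passage that makes the three conditions collapse into one.
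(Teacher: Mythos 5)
Your proposal is correct and is essentially the paper's own argument: the paper's proof consists precisely of citing Lemmas~\ref{salam-sen-10} and~\ref{salam-sen-20} together with the fact that nonzero elements of a finite field are invertible, and your cycle $(b)\Rightarrow(a)\Rightarrow(c)\Rightarrow(b)$ just spells out how those two lemmas combine. No gaps.
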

\begin{proof}
Use Lemmas~\ref{salam-sen-10} and \ref{salam-sen-20}, and the fact that all nonzero elements of $R$ are invertible.
\end{proof}
\noindent The following counterexample shows that if we omit the assumption of being $R$ a finite field, then
Corollary~\ref{salam-sen-30} may fails to be true.
\begin{counterexample}
For $\mathbb{Z}_4=\{\overline{0},\overline{1},\overline{2},\overline{3}\}(=\frac{\mathbb Z}{4\mathbb{Z}})$,
$\mathfrak{v}=(\mathfrak{v}_n)_{n\in\mathbb{N}}:=(\overline{2})_{n\in\mathbb{N}}$, and
$\psi:\mathop{\mathbb{N}\to\mathbb{N}}\limits_{n\mapsto n+1}$,
then for all $n\geq0$ we have $\mathfrak{v}_{\psi^n(1)}=\mathfrak{v}_{n+1}=\overline{2}\neq\overline{0}$.
Hence $(\mathbb{Z}_4^{\mathbb N},\sigma_{\psi,\mathfrak{v}})$ satisfies item (c) in Corollary~\ref{salam-sen-30}.
However $(\mathbb{Z}_4^{\mathbb N},\sigma_{\psi,\mathfrak{v}})$ is not strongly sensitive since for all
$x,y\in \mathbb{Z}_4^{\mathbb N}$ and $k\geq2$ we have $\sigma_{\psi,\mathfrak{v}}^k(x)=\sigma_{\psi,\mathfrak{v}}^k(y)=(\overline{0})_{n\in\mathbb{N}}$.
\end{counterexample}
\section{Li--Yorke chaotic $(R^\Gamma,\sigma_{\varphi,{\mathfrak w}})$, for finite field $R$}
\noindent Let's recall that by transformation semigroup
$(S,X,\rho)$ we mean a discrete topological semigroup $S$ with
identity $e$, topological space $X$ and continuous map
$\rho:S\times X\to X$ with $\rho(s,x)=:sx$ such that for all
$x\in X$ and $s,t\in S$ we have $ex=x$ and $(st)x=s(tx)$. It is
well known that the collection of all dynamical systems and  the
collection of all transformation semigroups with phase semigroup
$\mathbb{N}\cup\{0\}$ are in one--to--one correspondence in the
following sense:
A dynamical system $(X,f)$ is just the transformation semigroup
$(\mathbb{N}\cup\{0\},X,\rho_f)$ where $\rho_f(n,x):=f^n(x)$ for
all $n\geq0$ and $x\in X$.
\\ So it is possible to adopt the definition of Li--Yorke chaos from transformation semigroups with uniform phase space to dynamical systems with uniform phase spaces, i.e. whenever we say in dynamical system $(X,f)$ two points
$x,y\in X$ are proximal (resp. asymptotic, scrambled, ...) we mean $x,y$ are proximal (resp. asymptotic, scrambled, ...)
in transformation semigroup $(\mathbb{N}\cup\{0\},X,\rho_f)$, moreover whenever we say the dynamical system $(X,f)$
is Li--Yorke chaotic, it means the transformation semigroup $(\mathbb{N}\cup\{0\},X,\rho_f)$ is Li--Yorke chaotic.
\\
In dynamical system $(X,f)$ and transformation semigroup
$(S,X,\rho)$ with compact Hausdorff phase space $X$ and unique
compatible uniform structure $\mathcal F$, we have the following
definitions~\cite[Definitions 2.1 and 2.2]{khodam}:
\\
{\bf Proximal pair and proximal relation.} For $x,y\in X$, we say
$x,y$ are proximal in transformation semigroup $(S,X,\rho)$, if
there exists a net $\{s_{\lambda}\}_{\lambda\in\Lambda}$ in $S$
    and $z\in X$ with $\mathop{\lim}\limits_{\lambda\in\Lambda}s_\lambda x=z=
    \mathop{\lim}\limits_{\lambda\in\Lambda}s_\lambda y$.
In $(X,f)$ the following statements are equivalent:
\begin{itemize}
\item  $x,y$ are proximal in dynamical system $(X,f)$
\item for each $\mathcal{O}\in{\mathcal F}$,
    $\{n\geq1:(f^n(x),f^n(y))\in\mathcal{O}\}$ is infinite
\item for each $\mathcal{O}\in{\mathcal F}$,
    $\{n\geq1:(f^n(x),f^n(y))\in\mathcal{O}\}$ is nonempty.
\end{itemize}
We denote the collection of all proximal pairs of dynamical system $(X,f)$ by $Prox(X,f)$ or simply $Prox (f)$.
\\
{\bf Asymptotic pair and asymptotic relation.}  For $x,y\in X$,
we say $x,y$ are asymptotic     modul $\mathcal{P}_{fin}(S)(=\{A\subseteq S:A$ is finite$\}$) in $(S,X,\rho)$, if for each
$\mathcal{O}\in{\mathcal F}$,
    $\{s\in S:(sx,sy)\notin\mathcal{O}\}$ is finite. We denote the collection of all asymptotic pairs of dynamical system $(X,f)$
    by $Asym(X,f)$ or simply $Asym(f)$. Hence
    \[Asym(f)=\{(x,y)\in X\times X:\forall\mathcal{O}\in\mathcal{F}\:\exists N
    \geq1\:\forall n\geq N\:(f^n(x),f^n(y))\in\mathcal{O}\}\]
    in particular, $Asym(f)\subseteq Prox(f)$.
\\
{\bf Scrambled pair and scrambled subset.} We say $x,y\in X$ are
scrambled if they are proximal and they are not asymptotic.
$A\subseteq X$ with at least two elements is scrambled if all
distinct elements $z,w\in A$ are scrambled.
\\
{\bf Li--Yorke chaotic.} $(X,f)$ is Li--Yorke chaotic if it has an uncountable scrambled subset.
\\
In this section we prove that for finite field $R$ the following statements are equivalent:
\begin{itemize}
\item $(R^\Gamma,\sigma_{\varphi,\mathfrak{w}})$ is sensitive,
\item $(R^\Gamma,\sigma_{\varphi,\mathfrak{w}})$ has at least one scrambled pair,
\item $(R^\Gamma,\sigma_{\varphi,\mathfrak{w}})$ is Li--Yorke chaotic.
\end{itemize}
\begin{note}
For $x,y\in R^\Gamma$ the following statements are equivalent:
\begin{itemize}
\item[1.] $(x,y)\in Asym(\sigma_{\varphi,\mathfrak{w}})$
\item[2.] for all ${\mathcal O}\in{\mathcal K}$, $\{n\geq1:(\sigma_{\varphi,\mathfrak{w}}^n(x),\sigma_{\varphi,\mathfrak{w}}^n(y))\notin{\mathcal O}\}$ is finite,
\item[3.] for all finite subset $M$ of $\Gamma$, $\{n\geq1:(\sigma_{\varphi,\mathfrak{w}}^n(x),\sigma_{\varphi,\mathfrak{w}}^n(y))\notin{\mathcal O}_M\}$ is finite,
\item[4.] for all $\theta\in\Gamma$, $\{n\geq1:(\sigma_{\varphi,\mathfrak{w}}^n(x),\sigma_{\varphi,\mathfrak{w}}^n(y))\notin{\mathcal O}_{\{\theta\}}\}$ is finite.
\end{itemize}
Since for all ${\mathcal O}\in{\mathcal K}$ there exists finite subset $M$ of $\Gamma$ with
${\mathcal O}_M\subseteq\mathcal{O}$, also for all $\theta_1,\ldots,\theta_n\in\Gamma$ we have
${\mathcal O}_{\{\theta_1,\ldots,\theta_n\}}=\bigcap\{{\mathcal O}_{\{\theta_i\}}:1\leq i\leq n\}$.
\end{note}
\begin{lemma}\label{salam-yorke-10}
Suppose for all $\alpha\in\Gamma$:
\\
$\bullet$ either $\alpha$ is a quasi--periodic point of $\varphi$,
\\
$\bullet$ or $\alpha$ is a non--quasi--periodic point of $\varphi$ and there exists $n\geq0$ with
${\mathfrak w}_{\varphi^n(\alpha)}=0$,
\\
then $Prox(\sigma_{\varphi,\mathfrak{w}})= Asym(\sigma_{\varphi,\mathfrak{w}})$.
\end{lemma}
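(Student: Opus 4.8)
The inclusion $Asym(\sigma_{\varphi,\mathfrak w})\subseteq Prox(\sigma_{\varphi,\mathfrak w})$ holds in every dynamical system, as recalled above, so it remains to prove $Prox(\sigma_{\varphi,\mathfrak w})\subseteq Asym(\sigma_{\varphi,\mathfrak w})$. The plan is to fix a proximal pair $(x,y)$, with $x=(x_\alpha)_{\alpha\in\Gamma}$ and $y=(y_\alpha)_{\alpha\in\Gamma}$, and to verify the characterization of $Asym$ given by item~(4) of the Note: for every $\theta\in\Gamma$ the set $N_\theta:=\{n\geq1:(\sigma^n_{\varphi,\mathfrak w}(x))_\theta\neq(\sigma^n_{\varphi,\mathfrak w}(y))_\theta\}$ is finite. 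Throughout I would use that $(\sigma^n_{\varphi,\mathfrak w}(x))_\theta=\mathfrak w_\theta\mathfrak w_{\varphi(\theta)}\cdots\mathfrak w_{\varphi^{n-1}(\theta)}\,x_{\varphi^n(\theta)}$ and likewise for $y$, writing $W_n(\theta)$ for the common coefficient $\mathfrak w_\theta\cdots\mathfrak w_{\varphi^{n-1}(\theta)}$ (with $W_0(\theta)=1$).

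First I would dispose of every $\theta$ whose forward orbit $O(\theta):=\{\varphi^n(\theta):n\geq0\}$ contains a zero of the weight, say $\mathfrak w_{\varphi^k(\theta)}=0$ for some $k\geq0$; by the hypothesis of the lemma this includes every non-quasi-periodic $\theta$, but it may also occur for some quasi-periodic ones. For such $\theta$ the coefficient $W_n(\theta)$ carries the factor $\mathfrak w_{\varphi^k(\theta)}=0$ as soon as $n>k$, so both $\theta$-coordinates vanish and agree; hence $N_\theta\subseteq\{1,\dots,k\}$ is finite. No use of proximality is made here.

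The remaining, and principal, case is a quasi-periodic $\theta$ with $\mathfrak w_\alpha\neq0$ for every $\alpha\in O(\theta)$; this is the only place the assumption that $R$ is a finite field enters. Then each such $\mathfrak w_\alpha$ is invertible, hence so is every product of them, so $W_n(\alpha)$ is invertible for all $\alpha\in O(\theta)$ and $n\geq0$; cancelling it shows that, for $\alpha\in O(\theta)$ and $n\geq1$, one has $(\sigma^n_{\varphi,\mathfrak w}(x))_\alpha=(\sigma^n_{\varphi,\mathfrak w}(y))_\alpha$ if and only if $x_{\varphi^n(\alpha)}=y_{\varphi^n(\alpha)}$. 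Since $\theta$ is quasi-periodic, $O(\theta)$ is finite and $(\varphi^n(\theta))_{n\geq0}$ is eventually periodic; let $C\subseteq O(\theta)$ be the nonempty finite set of $\varphi$-periodic points of $O(\theta)$, on which $\varphi$ restricts to a permutation, and choose $p\geq0$ with $\varphi^n(\theta)\in C$ for all $n\geq p$. Applying proximality of $(x,y)$ to the finite set $C$ (through the characterization of proximal pairs recalled above), there is some $n_1\geq1$ with $(\sigma^{n_1}_{\varphi,\mathfrak w}(x))_\alpha=(\sigma^{n_1}_{\varphi,\mathfrak w}(y))_\alpha$, i.e.\ $x_{\varphi^{n_1}(\alpha)}=y_{\varphi^{n_1}(\alpha)}$, for every $\alpha\in C$; since $\varphi^{n_1}$ maps $C$ bijectively onto $C$, this forces $x_\beta=y_\beta$ for all $\beta\in C$. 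Consequently, for every $n\geq p$ we have $\varphi^n(\theta)\in C$, so $x_{\varphi^n(\theta)}=y_{\varphi^n(\theta)}$ and hence $(\sigma^n_{\varphi,\mathfrak w}(x))_\theta=(\sigma^n_{\varphi,\mathfrak w}(y))_\theta$; thus $N_\theta\subseteq\{1,\dots,p-1\}$ is finite.

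Once $N_\theta$ is known to be finite for every $\theta$, the Note gives $(x,y)\in Asym(\sigma_{\varphi,\mathfrak w})$, which yields the inclusion and hence the equality. The one genuinely non-formal step is the passage, in the last paragraph, from a single return time $n_1$ furnished by proximality to agreement of $x$ and $y$ on the whole cycle $C$; this is where I expect the (minor) difficulty to lie, and it works precisely because $\varphi^{n_1}$ permutes the finite set $C$. Everything else is the bookkeeping formula for $\sigma^n_{\varphi,\mathfrak w}$ together with the absence of zero-divisors in a finite field.
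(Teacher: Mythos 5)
Your proof is correct and follows essentially the same route as the paper's: in both arguments the only nontrivial coordinates $\theta$ are the quasi--periodic ones with nowhere-vanishing weights along the orbit, and there one applies proximality to the entourage $\mathcal{O}_M$ of the finite orbit and cancels the invertible weight products to force eventual agreement of $x$ and $y$ along that orbit. The paper argues by contradiction from a non-asymptotic witness and works with the whole finite orbit $\Lambda$ rather than just the terminal cycle $C$, but these are only organizational differences.
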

\begin{proof}
Suppose for all $\alpha\in\Gamma$, $\alpha$ is a quasi--periodic
point of $\varphi$ or there exists $n\geq0$ with ${\mathfrak
w}_{\varphi^n(\alpha)}=0$.
We prove $Prox(\sigma_{\varphi,\mathfrak{w}})\subseteq Asym(\sigma_{\varphi,\mathfrak{w}})$.
Consider
$(x,y)=((x_\alpha)_{\alpha\in\Gamma},(y_\alpha)_{\alpha\in\Gamma})\in
Prox(\sigma_{\varphi,\mathfrak{w}}) \setminus
Asym(\sigma_{\varphi,\mathfrak{w}})$. Since $(x,y)\notin
Asym(\sigma_{\varphi,\mathfrak{w}})$, there exists
$\theta\in\Gamma$ such that
\[T_1:=\{n\geq1:(\sigma_{\varphi,\mathfrak{w}}^n(x),\sigma_{\varphi,\mathfrak{w}}^n(y))\notin{\mathcal O}_{\{\theta\}}\}\]
is infinite.
Consider $\Lambda$ as
($\divideontimes$) in the proof of Lemma~\ref{salam-sen-10} for $M=\{\theta\}$, hence:
\[\Lambda:=\left\{\begin{array}{lc} \{\varphi^i(\theta):i\geq0\}\:, & \theta{\rm \: is \: a \: quasi-periodic \: point \: of \: }\varphi \:,\\
\{\theta,\varphi(\theta),\ldots,\varphi^i(\theta)\}\:, & \theta{\rm \: is \: a \: non-quasi-periodic \: point \: of \: }  \\
& \varphi {\rm \: and\:}
i=\min\{n\geq0:\mathfrak{w}_{\varphi^n(\theta)}=0\}\:,
\end{array}\right.\]
then $\Lambda$ is a finite subset of $\Gamma$. Choose $m>\:|\Lambda|+1$ with $m\in T_1$. By
$(\sigma_{\varphi,\mathfrak{w}}^m(x),
    \sigma_{\varphi,\mathfrak{w}}^m(y))\notin{\mathcal O}_{\{\theta\}}$ we have
    $\mathfrak{w}_\theta\mathfrak{w}_{\varphi(\theta)}\cdots\mathfrak{w}_{\varphi^{m-1}(\theta)}
    x_{\varphi^m(\theta)}\neq\mathfrak{w}_\theta\mathfrak{w}_{\varphi(\theta)}\cdots\mathfrak{w}_{\varphi^{m-1}(\theta)}
    y_{\varphi^m(\theta)}$ which shows $x_{\varphi^m(\theta)}\neq y_{\varphi^m(\theta)}$
and $\mathfrak{w}_{\varphi^i(\theta)}\neq0$ for all $i\in\{0,\ldots,|\Lambda|+1\}$. Hence not only $\theta$ is a quasi--periodic point
of $\varphi$ but also $\mathfrak{w}_{\varphi^j(\theta)}\neq0$ for all $j\geq0$.
\\
Since $(x,y)\in Prox(\sigma_{\varphi,\mathfrak{w}})$, there exists $l\geq1$ such that
$(\sigma_{\varphi,\mathfrak{w}}^l(x),\sigma_{\varphi,\mathfrak{w}}^l(y))\in{\mathcal O}_\Lambda$, thus
$\mathfrak{w}_\alpha\mathfrak{w}_{\varphi(\alpha)}\cdots\mathfrak{w}_{\varphi^{l-1}(\alpha)}x_{\varphi^l(\alpha)}=
    \mathfrak{w}_\alpha\mathfrak{w}_{\varphi(\alpha)}\cdots\mathfrak{w}_{\varphi^{l-1}(\alpha)}y_{\varphi^l(\alpha)}$
    for all $\alpha\in\Lambda$, i.e.$ \mathfrak{w}_{\varphi^n(\theta)}\cdots
    \mathfrak{w}_{\varphi^{n+l-1}(\theta)}x_{\varphi^{n+l}(\theta)}=
        \mathfrak{w}_{\varphi^n(\theta)}\cdots
    \mathfrak{w}_{\varphi^{n+l-1}(\theta)}y_{\varphi^{n+l}(\theta)}$ for all $n\geq0$. Hence
\[\forall n\geq l\:\:
x_{\varphi^{n}(\theta)}=y_{\varphi^{n}(\theta)} \]
so
$(\sigma_{\varphi,\mathfrak{w}}^n(x),
    \sigma_{\varphi,\mathfrak{w}}^n(y))\in{\mathcal O}_{\{\theta\}}
$ for all $n\geq l$ which is in contradiction with
infiniteness of $T_1$.
\end{proof}
\begin{remark}\label{salam-yorke-20}
There exists an uncountable collection $\mathcal E$ of infinite subsets of $\mathbb N$ such that for each distinct
$E,F\in\mathcal{E}$, the set $E\cap F$ is finite~\cite{large}.
\end{remark}
\begin{definition}
For $h:A\to A$ define equivalence relation $\thicksim_h$ on $A$ with
\[x\thicksim_h y\Leftrightarrow (\exists n,m\geq0 \:\: h^n(x)=h^m(y))\]
for all $x,y\in A$. If $h:A\to A$ is one--to--one, then for every
equivalence class $D\in \frac{A}{\thicksim_h}$ exactly one of the
following conditions occurs:
\begin{itemize}
\item $D$ is finite and for all $\alpha\in D$, we have $D=\{h^n(\alpha):n\geq0\}\subseteq Per(h)$,
\item $D$ is infinite and there exists unique $\alpha\in D$ such that
    $D=\{h^n(\alpha):n\geq0\}$  (so $\{h^n(\alpha)\}_{n\geq0}$ is
    a one--to--one sequence),
\item $D$ is infinite and for all $\alpha\in D$ and $n\in{\mathbb Z}$
    we have $h^n(\alpha)\neq\varnothing$, moreover
    $D=\{h^n(\alpha):n\in{\mathbb Z}\}$  (so $\{h^n(\alpha)\}_{n\in{\mathbb Z}}$ is
    a one--to--one bi--sequence),
\end{itemize}
\end{definition}
\subsection{Lemmas on a special case}\label{ABC}
In the following string of Lemmas, i.e., \ref{ABC10}, \ref{ABC20}, \ref{ABC30}
suppose $\nu\in\Gamma$ is a non--quasi--periodic point of $\varphi$
such that $\mathfrak{w}_{\varphi^n(\nu)}\neq0$, for all $n\geq0$, if $A\subseteq\mathbb{N}\cup\{0\}$ let:
\[\xi_A(n):=\left\{\begin{array}{lc} 1 & n\in A\:, \\ 0 & n\notin A\:,\end{array}\right.\]
and $x^{A}:=(x^{A}_\alpha)_{\alpha\in\Gamma}$ where:
\[x_\alpha^A=\left\{\begin{array}{lc} \xi_A(p) & \alpha=\varphi^{\frac{p(p+1)}{2}}(\nu),p\geq0 \:, \\
0 & otherwise\:. \end{array}\right.\]
\begin{lemma}\label{ABC10}
If $E,F\subseteq \mathbb{N}\cup\{0\}$ with infinite $E\setminus F$,
then $(x^E,x^F)\notin Asym(\sigma_{\varphi,\mathfrak{w}})$.
\end{lemma}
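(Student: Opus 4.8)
The plan is to exhibit a single coordinate $\theta\in\Gamma$ witnessing the failure of the asymptotic relation, and the obvious candidate is $\theta=\nu$ itself. By the Note above (equivalence of items~1 and~4), $(x^E,x^F)\notin Asym(\sigma_{\varphi,\mathfrak{w}})$ as soon as $\{n\geq1:(\sigma_{\varphi,\mathfrak{w}}^n(x^E),\sigma_{\varphi,\mathfrak{w}}^n(x^F))\notin\mathcal{O}_{\{\nu\}}\}$ is infinite, so this is all I need to verify.

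First I would unwind the iterate: $\sigma_{\varphi,\mathfrak{w}}^n(z)=(\mathfrak{w}_\alpha\mathfrak{w}_{\varphi(\alpha)}\cdots\mathfrak{w}_{\varphi^{n-1}(\alpha)}\,z_{\varphi^n(\alpha)})_{\alpha\in\Gamma}$, so that the $\nu$-th coordinates of $\sigma_{\varphi,\mathfrak{w}}^n(x^E)$ and $\sigma_{\varphi,\mathfrak{w}}^n(x^F)$ disagree exactly when $c_n\,x^E_{\varphi^n(\nu)}\neq c_n\,x^F_{\varphi^n(\nu)}$, where $c_n:=\mathfrak{w}_\nu\mathfrak{w}_{\varphi(\nu)}\cdots\mathfrak{w}_{\varphi^{n-1}(\nu)}$. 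Since $R$ is a field and, by hypothesis, $\mathfrak{w}_{\varphi^i(\nu)}\neq0$ for all $i\geq0$, the element $c_n$ is a product of nonzero field elements, hence $c_n\neq0$; thus $c_n a\neq c_n b$ whenever $a\neq b$.

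Next I would use the one--to--oneness of $\{\varphi^m(\nu)\}_{m\geq0}$ (valid because $\nu$ is non--quasi--periodic): for $n$ of the special form $n=\frac{p(p+1)}{2}$ the definition of $x^A$ gives unambiguously $x^E_{\varphi^n(\nu)}=\xi_E(p)$ and $x^F_{\varphi^n(\nu)}=\xi_F(p)$. Now take any $p\in E\setminus F$ with $p\geq1$ and put $n_p:=\frac{p(p+1)}{2}\geq1$; then $x^E_{\varphi^{n_p}(\nu)}=\xi_E(p)=1\neq0=\xi_F(p)=x^F_{\varphi^{n_p}(\nu)}$, so by the previous paragraph $c_{n_p}x^E_{\varphi^{n_p}(\nu)}\neq c_{n_p}x^F_{\varphi^{n_p}(\nu)}$, i.e. $(\sigma_{\varphi,\mathfrak{w}}^{n_p}(x^E),\sigma_{\varphi,\mathfrak{w}}^{n_p}(x^F))\notin\mathcal{O}_{\{\nu\}}$. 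As $E\setminus F$ is infinite and $p\mapsto n_p$ is strictly increasing, $\{n_p:p\in E\setminus F,\ p\geq1\}$ is an infinite subset of $\{n\geq1:(\sigma_{\varphi,\mathfrak{w}}^n(x^E),\sigma_{\varphi,\mathfrak{w}}^n(x^F))\notin\mathcal{O}_{\{\nu\}}\}$, so the latter is infinite and we are done.

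I do not anticipate a genuine obstacle. The two points requiring attention are: (i) that the weight products $c_n$ remain nonzero---this is precisely where the hypothesis that $R$ is a \emph{field} (not just a ring) enters, ruling out cancellation into zero divisors; and (ii) that one--to--oneness of the $\varphi$-orbit of $\nu$ makes the values $x^E_{\varphi^n(\nu)}$ and $x^F_{\varphi^n(\nu)}$ well defined by the case split in the definition of $x^A$. The triangular spacing $\frac{p(p+1)}{2}$ is irrelevant to this lemma---any strictly increasing reindexing would do---and is presumably set up for the companion lemmas that follow, where the gaps between successive times of disagreement need to grow in order to force the pair into $Prox(\sigma_{\varphi,\mathfrak{w}})$.
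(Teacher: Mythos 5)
Your proof is correct and follows essentially the same route as the paper's: for each $r\in E\setminus F$ the two sequences differ at coordinate $\varphi^{r(r+1)/2}(\nu)$, the nonvanishing weight product preserves this difference, and hence the iterates at time $\frac{r(r+1)}{2}$ leave $\mathcal{O}_{\{\nu\}}$, giving infinitely many bad times. If anything you are slightly more careful than the paper, whose displayed conclusion writes $E\setminus F$ where it should write $\{\frac{r(r+1)}{2}:r\in E\setminus F\}$ as the infinite subset of bad times --- a harmless slip that your explicit map $p\mapsto n_p$ avoids.
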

\begin{proof}
For $r\in E\setminus F$, we have
\[
x^E_{\varphi^{\frac{r(r+1)}2}(\nu)}=\xi_E(r)=1\:\:,\:\:
x^F_{\varphi^{\frac{r(r+1)}2}(\nu)}=\xi_F(r)=0\:, \]
which leads to:
\[\begin{array}{c} \mathfrak{w}_\nu\mathfrak{w}_{\varphi(\nu)}\cdots\mathfrak{w}_{\varphi^{\frac{r(r+1)}2-1}(\nu)}x^{E}_{\varphi^{\frac{r(r+1)}2}(\nu)}\neq 0 \:,\\ \\
\mathfrak{w}_\nu\mathfrak{w}_{\varphi(\nu)}\cdots\mathfrak{w}_{\varphi^{\frac{r(r+1)}2-1}(\nu)}x^{F}_{\varphi^{\frac{r(r+1)}2}(\nu)}= 0 \:,
\end{array}\]
and $(\sigma_{\varphi,\mathfrak{w}}^{r}((x_\alpha^E)_{\alpha\in\Gamma}),\sigma_{\varphi,\mathfrak{w}}^{r}((x_\alpha^F)_{\alpha\in\Gamma}))\notin\mathcal{O}_{\{\nu\}}$. Hence:
\[E\setminus F\subseteq\{n\geq1:(\sigma_{\varphi,\mathfrak{w}}^{n}((x_\alpha^E)_{\alpha\in\Gamma}),\sigma_{\varphi,\mathfrak{w}}^{n}((x_\alpha^F)_{\alpha\in\Gamma}))\notin\mathcal{O}_{\{\nu\}}\}\]
and $\{n\geq1:(\sigma_{\varphi,\mathfrak{w}}^{n}(x^E),\sigma_{\varphi,\mathfrak{w}}^{n}(x^F))\notin\mathcal{O}_{\{\nu\}}\}$ is infinite, therefore:
\[(x^{E},x^{F})\notin Asym(\sigma_{\varphi,\mathfrak{w}})\:.\]
\end{proof}
\begin{lemma}\label{ABC20}
Suppose $M$ is a finite subset
of $\frac{\nu}{\thicksim_\varphi}$ and $E,F\subseteq{\mathbb N}\cup\{0\}$, then
\[T_M:=\{n\geq1:(\sigma_{\varphi,\mathfrak{w}}^n(x^E),\sigma_{\varphi,\mathfrak{w}}^n(x^F))\in \mathcal{O}_M\}\]
is infinite.
\end{lemma}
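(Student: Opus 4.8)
The plan is to exploit that $x^E$ and $x^F$ differ only on the extremely thin coordinate set $\{\varphi^{p(p+1)/2}(\nu):p\ge 0\}$, and that for each fixed $\alpha\in M$ the sequence $(\varphi^n(\alpha))_n$ eventually coincides with the forward $\nu$--orbit, shifted by an integer constant. Consequently the set of $n$ for which some $\alpha$--coordinate of $\sigma_{\varphi,\mathfrak{w}}^n(x^E)$ and $\sigma_{\varphi,\mathfrak{w}}^n(x^F)$ can differ is contained in a finite union of shifted copies of the sequence of triangular numbers, and such a set is far too sparse to have finite complement in $\mathbb{N}$; the complement of that set then sits inside $T_M$.

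First I would unwind the definitions. For $n\ge 1$ the pair $(\sigma_{\varphi,\mathfrak{w}}^n(x^E),\sigma_{\varphi,\mathfrak{w}}^n(x^F))$ lies in $\mathcal{O}_M$ as soon as $x^E_{\varphi^n(\alpha)}=x^F_{\varphi^n(\alpha)}$ for every $\alpha\in M$, since then the weight factors $\mathfrak{w}_\alpha\mathfrak{w}_{\varphi(\alpha)}\cdots\mathfrak{w}_{\varphi^{n-1}(\alpha)}$ multiply equal quantities. Thus it suffices to prove that $S:=\{n\ge 1:\forall\alpha\in M,\ x^E_{\varphi^n(\alpha)}=x^F_{\varphi^n(\alpha)}\}$ is infinite, because $S\subseteq T_M$.

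Next I would linearise the orbits using $M\subseteq\frac{\nu}{\thicksim_\varphi}$. For each $\alpha\in M$ pick $a_\alpha,b_\alpha\ge 0$ with $\varphi^{a_\alpha}(\alpha)=\varphi^{b_\alpha}(\nu)$, so that $\varphi^n(\alpha)=\varphi^{n+c_\alpha}(\nu)$ for all $n\ge a_\alpha$, where $c_\alpha:=b_\alpha-a_\alpha\in\mathbb{Z}$ (and $n+c_\alpha\ge b_\alpha\ge 0$). Set $N_0:=\max_{\alpha\in M}a_\alpha<\infty$. Since $\nu$ is non--quasi--periodic, $(\varphi^k(\nu))_{k\ge 0}$ is one--to--one, and by construction $x^A_\beta\ne 0$ forces $\beta=\varphi^{p(p+1)/2}(\nu)$ for a unique $p\ge 0$; hence for $n\ge N_0$ and $\alpha\in M$, if $x^E_{\varphi^n(\alpha)}\ne x^F_{\varphi^n(\alpha)}$ then $\varphi^{n+c_\alpha}(\nu)=\varphi^{p(p+1)/2}(\nu)$ for some $p\ge 0$, i.e.\ $n+c_\alpha=\tfrac{p(p+1)}2$. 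Therefore
\[\{\,n\ge N_0:\ n\notin S\,\}\ \subseteq\ \bigcup_{\alpha\in M}\Bigl\{\tfrac{p(p+1)}2-c_\alpha:\ p\ge 0\Bigr\}.\]

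Finally I would close with an elementary counting estimate: for any $N$ the number of triangular numbers in $[0,N]$ is $O(\sqrt N)$, so each set on the right contributes at most $O(\sqrt N)$ elements of $[0,N]$, and the union of $|M|$ of them contributes only $o(N)$; hence $S\cap[N_0,N]$ has at least $N-N_0-o(N)\to\infty$ elements, so $S$, and therefore $T_M$, is infinite. The only step needing any care is the linearisation: one must correctly translate membership in the equivalence class $\frac{\nu}{\thicksim_\varphi}$ into the affine identity $\varphi^n(\alpha)=\varphi^{n+c_\alpha}(\nu)$ valid for all large $n$, and then invoke injectivity of the forward $\nu$--orbit to pin each ``bad'' $n$ to a shifted triangular number; once that is in place, the sparsity of the triangular numbers together with the finiteness of $M$ finishes the argument.
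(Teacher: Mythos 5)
Your proof is correct, and it rests on the same two pillars as the paper's: the eventual alignment $\varphi^n(\alpha)=\varphi^{n+c_\alpha}(\nu)$ of each $M$-orbit with the $\nu$-orbit, and the sparsity of the triangular numbers $\frac{p(p+1)}{2}$ which carry the support of $x^E$ and $x^F$. The difference is only in the finishing move. The paper argues constructively: it exhibits the explicit times $\frac{n(n+1)}{2}+r$ (for large $n$ and a suitably chosen $r$) and verifies by a two-sided inequality that the shifted index $\frac{n(n+1)}{2}+r-n_\beta+m_\beta$ falls strictly between the consecutive triangular numbers $\frac{n(n+1)}{2}$ and $\frac{(n+1)(n+2)}{2}$, hence both coordinates vanish there; this directly produces an infinite subset of $T_M$. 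You instead bound the complement: the bad times $n\ge N_0$ are contained in the finite union $\bigcup_{\alpha\in M}\{\frac{p(p+1)}{2}-c_\alpha:p\ge0\}$, which meets $[0,N]$ in only $O(\sqrt{N})$ points. Your route avoids the explicit gap computation and in fact yields the slightly stronger conclusion that the set of good times has density one, whereas the paper's explicit witness set $\{\frac{n(n+1)}{2}+r\}$ is itself sparse; the paper's version, on the other hand, hands you concrete synchronization times, which is the form in which the lemma is consumed in Lemma~\ref{ABC30}. Both arguments are complete; the only step in yours requiring care, as you note, is the uniqueness of $p$ with $\varphi^{n+c_\alpha}(\nu)=\varphi^{p(p+1)/2}(\nu)$, which you correctly justify by the injectivity of $(\varphi^k(\nu))_{k\ge0}$ coming from the non--quasi--periodicity of $\nu$.
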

\begin{proof}
Since $\mathcal{O}_\varnothing=R^\Gamma\times R^\Gamma$, for $M=\varnothing $ the proof is obvious.
Suppose $M\neq\varnothing$. For $\alpha\in M$, there exist
$n_\alpha,m_\alpha\geq1$ such that
$\varphi^{m_\alpha}(\nu)=\varphi^{n_\alpha}(\alpha)$. Choose $ r\geq\mathop{\max}\limits_{\alpha\in M} n_\alpha$,
then for each $n\geq r+\mathop{\max}\limits_{\alpha\in M} m_\alpha$
and $\beta\in M$ we have:
\begin{eqnarray*}
\frac{n(n+1)}2+r-n_\beta+m_\beta &\geq  & \frac{n(n+1)}2+r-\mathop{\max}\limits_{\alpha\in M} n_\alpha+m_\beta \\
& \geq & \frac{n(n+1)}2+m_\beta >\frac{n(n+1)}2
\end{eqnarray*}
and
\begin{eqnarray*}
\frac{n(n+1)}2+r-n_\beta+m_\beta &\leq  & \frac{n(n+1)}2+r-n_\beta+\mathop{\max}\limits_{\alpha\in M} m_\alpha  \\
& \leq & \frac{n(n+1)}2-n_\beta+n \\
& < & \frac{n(n+1)}2+n<\frac{(n+1)(n+2)}2
\end{eqnarray*}
therefore $\frac{n(n+1)}2<\frac{n(n+1)}2+r-n_\beta+m_\beta<\frac{(n+1)(n+2)}2$ and:
\[\forall p\geq0\:\:\:( \frac{n(n+1)}2+r-n_\beta+m_\beta\neq\frac{p(p+1)}2)\:,\]
which shows
$x^E_{\varphi^{ \frac{n(n+1)}2+r-n_\beta+m_\beta}(\nu)}=x^F_{\varphi^{ \frac{n(n+1)}2+r-n_\beta+m_\beta}(\nu)}=0$.
Hence for $K=E,F$ we have
$x_{\varphi^{ \frac{n(n+1)}2+r}(\beta)}^K=x_{\varphi^{ \frac{n(n+1)}2+r-n_\beta+m_\beta}(\nu)}^K=0$,
which leads to
\[(\sigma_{\varphi,\mathfrak{w}}^{\frac{n(n+1)}2+r}(x^E),\sigma_{\varphi,\mathfrak{w}}^{\frac{n(n+1)}2+r}(x^F))\in\mathcal{O}_{\{\beta\}}\:.\]
Thus
{\small \begin{eqnarray*}
\{\frac{n(n+1)}2+r:n\geq r+\mathop{\max}\limits_{\alpha\in M} m_\alpha\} & \subseteq &
\mathop{\bigcap}\limits_{\beta\in M}\{n\geq1:(\sigma_{\varphi,\mathfrak{w}}^n(x^E),\sigma_{\varphi,\mathfrak{w}}^n(x^F))\in \mathcal{O}_{\{\beta\}}\} \\
& = & \{n\geq1:(\sigma_{\varphi,\mathfrak{w}}^n(x^E),\sigma_{\varphi,\mathfrak{w}}^n(x^F))\in \mathcal{O}_M\}=T_M
\end{eqnarray*}}
and $T_M$ is infinite.
\end{proof}
\begin{lemma}\label{ABC30}
For all $E,F\subseteq{\mathbb N}\cup\{0\}$, we have
$(x^E,x^F)\in Prox(\sigma_{\varphi,\mathfrak{w}})$.
\end{lemma}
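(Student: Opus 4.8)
The plan is to invoke the proximality characterization recorded earlier in this section: $(x^E,x^F)\in Prox(\sigma_{\varphi,\mathfrak{w}})$ holds precisely when for every $\mathcal{O}\in\mathcal{K}$ the set $\{n\geq1:(\sigma_{\varphi,\mathfrak{w}}^n(x^E),\sigma_{\varphi,\mathfrak{w}}^n(x^F))\in\mathcal{O}\}$ is nonempty. Since every $\mathcal{O}\in\mathcal{K}$ contains some $\mathcal{O}_M$ with $M$ a finite subset of $\Gamma$, it suffices to show that for each finite $M\subseteq\Gamma$ the set $\{n\geq1:(\sigma_{\varphi,\mathfrak{w}}^n(x^E),\sigma_{\varphi,\mathfrak{w}}^n(x^F))\in\mathcal{O}_M\}$ is infinite.

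First I would fix a finite $M\subseteq\Gamma$ and split it as $M=M_1\cup M_2$, where $M_1:=\{\alpha\in M:\alpha\thicksim_\varphi\nu\}$ and $M_2:=M\setminus M_1$; thus $M_1$ is a finite subset of $\frac{\nu}{\thicksim_\varphi}$. For $\alpha\in M_2$ and $n\geq1$, if $\varphi^n(\alpha)$ lay in $\{\varphi^k(\nu):k\geq0\}$ then $\alpha\thicksim_\varphi\nu$, contradicting $\alpha\in M_2$; as the support of $x^K$ (for $K=E,F$) is contained in $\{\varphi^k(\nu):k\geq0\}$, this forces $x^E_{\varphi^n(\alpha)}=x^F_{\varphi^n(\alpha)}=0$, so the $\alpha$-th coordinates of $\sigma_{\varphi,\mathfrak{w}}^n(x^E)$ and $\sigma_{\varphi,\mathfrak{w}}^n(x^F)$ both vanish. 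Hence $(\sigma_{\varphi,\mathfrak{w}}^n(x^E),\sigma_{\varphi,\mathfrak{w}}^n(x^F))\in\mathcal{O}_{M_2}$ for every $n\geq1$.

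Next I would apply Lemma~\ref{ABC20} to the finite set $M_1\subseteq\frac{\nu}{\thicksim_\varphi}$, obtaining that $T_{M_1}=\{n\geq1:(\sigma_{\varphi,\mathfrak{w}}^n(x^E),\sigma_{\varphi,\mathfrak{w}}^n(x^F))\in\mathcal{O}_{M_1}\}$ is infinite. Since $\mathcal{O}_M=\mathcal{O}_{M_1}\cap\mathcal{O}_{M_2}$, the previous paragraph yields $\{n\geq1:(\sigma_{\varphi,\mathfrak{w}}^n(x^E),\sigma_{\varphi,\mathfrak{w}}^n(x^F))\in\mathcal{O}_M\}=T_{M_1}$, which is infinite. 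As $M$ was an arbitrary finite subset of $\Gamma$, this gives $(x^E,x^F)\in Prox(\sigma_{\varphi,\mathfrak{w}})$.

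As for where the difficulty sits: essentially all the quantitative work has already been done in Lemma~\ref{ABC20}, which supplies the explicit infinite set of return times $\{\tfrac{n(n+1)}{2}+r:n\text{ large}\}$ along which both orbits fall into $\mathcal{O}_{M_1}$. The only new ingredient is the elementary remark that coordinates outside the $\thicksim_\varphi$-class of $\nu$ never meet the supports of $x^E$ and $x^F$ and hence contribute the zero coordinate to every iterate. So I do not expect a genuine obstacle here; the point is merely to organize the reduction to a single equivalence class so that Lemma~\ref{ABC20} applies.
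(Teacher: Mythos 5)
Your proposal is correct and follows essentially the same route as the paper: split the finite index set into its intersection with $\frac{\nu}{\thicksim_\varphi}$ and its complement, observe that coordinates outside the class of $\nu$ are identically zero for both points (so those indices impose no constraint), and invoke Lemma~\ref{ABC20} on the part inside the class. The paper's own proof is exactly this decomposition, so nothing further is needed.
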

\begin{proof}
For $H\subseteq \Gamma$ let
$T_H:= \{n\geq1:(\sigma_{\varphi,\mathfrak{w}}^n(x^E),\sigma_{\varphi,\mathfrak{w}}^n(x^F))\in \mathcal{O}_H\}$.
For each
$\alpha\in\Gamma\setminus\frac{\theta}{\thicksim_\varphi}$ and
each $n\geq0$, we have
$\varphi^n(\alpha)\in\Gamma\setminus\frac{\theta}{\thicksim_\varphi}$
and $x^E_{\varphi^n(\alpha)}=x^F_{\varphi^n(\alpha)}=0$, thus
$(\sigma_{\varphi,{\mathfrak w}}^n(x^E),\sigma_{\varphi,{\mathfrak
w}}^n(x^F))\in\mathcal{O}_{\{\alpha\}}$.
Which shows
\[\forall H\subseteq \Gamma\setminus\frac{\theta}{\thicksim_\varphi}\:\:\: (T_{H}=\mathbb{N})\:.\]
Thus for each finite subset $L$ of $\Gamma$ we have:
\[T_L = T_{L\cap\frac{\nu}{\thicksim_\varphi}}\cap T_{L\setminus\frac{\theta}{\thicksim_\varphi}}
=T_{L\cap\frac{\nu}{\thicksim_\varphi}}\cap\mathbb{N}=T_{L\cap\frac{\nu}{\thicksim_\varphi}}\]
By Lemma~\ref{ABC20}, $T_{L\cap\frac{\theta}{\thicksim_\varphi}}$ is infinite, thus $T_L$ is infinite, which leads to the desired result
\end{proof}
\subsection{Main theorem on Li--Yorke chaoticity of $(R^\Gamma,\sigma_{\varphi,\mathfrak{w}})$}
\noindent Now we are ready to establish our main theorem on Li--Yorke chaoticity of weighted
generalized shift $(R^\Gamma,\sigma_{\varphi,\mathfrak{w}})$.
\begin{theorem}\label{salam-yorke-30}
In finite field $R$ the following statements are equivalent (see~\cite{gen-li} for countable $\Gamma$ and generalized shift dynamical system $(\sigma_\varphi,R^\Gamma)$ too):
\\
1. $(R^\Gamma,\sigma_{\varphi,\mathfrak{w}})$ is (strongly) sensitive,
\\
2. there exists non--quasi--periodic point $\theta\in\Gamma$ such that for all $n\geq0$,
    $\mathfrak{w}_{\varphi^n(\theta)}\neq0$,
\\
3. $(R^\Gamma,\sigma_{\varphi,\mathfrak{w}})$ has at least one scrambled pair,
\\
4. $(R^\Gamma,\sigma_{\varphi,\mathfrak{w}})$ is Li--Yorke chaotic.
\end{theorem}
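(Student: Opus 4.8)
The plan is to prove the single equivalence $(1)\Leftrightarrow(2)$ separately and then close the cycle $(2)\Rightarrow(4)\Rightarrow(3)\Rightarrow(2)$. The equivalence $(1)\Leftrightarrow(2)$ is precisely Corollary~\ref{salam-sen-30}: since $R$ is a finite field, every nonzero element is invertible, so the condition ``$\mathfrak{w}_{\varphi^n(\theta)}\neq 0$'' is the same as ``$\mathfrak{w}_{\varphi^n(\theta)}$ invertible'', and Lemmas~\ref{salam-sen-10} and~\ref{salam-sen-20} supply both directions (and the coincidence of sensitivity with strong sensitivity). It therefore remains to establish $(2)\Rightarrow(4)$, $(4)\Rightarrow(3)$, and $(3)\Rightarrow(2)$, of which only the first requires an argument, and that argument is assembled entirely from the lemmas of Subsection~\ref{ABC}.

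For $(2)\Rightarrow(4)$, fix a non-quasi-periodic point $\theta\in\Gamma$ with $\mathfrak{w}_{\varphi^n(\theta)}\neq0$ for all $n\geq0$, and take this $\theta$ as the point $\nu$ of Subsection~\ref{ABC}. Because $\{\varphi^n(\theta)\}_{n\geq0}$ is one-to-one and the triangular numbers $\tfrac{p(p+1)}{2}$ $(p\geq0)$ are pairwise distinct, the vector $x^A\in R^\Gamma$ is unambiguously defined for every $A\subseteq\mathbb{N}\cup\{0\}$. Choose, by Remark~\ref{salam-yorke-20}, an uncountable family $\mathcal{E}$ of infinite subsets of $\mathbb{N}$ any two distinct members of which meet in a finite set, and put $S:=\{x^E:E\in\mathcal{E}\}$. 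For distinct $E,F\in\mathcal{E}$ the set $E\setminus F=E\setminus(E\cap F)$ is infinite (as $E$ is infinite and $E\cap F$ finite), so Lemma~\ref{ABC10} yields $(x^E,x^F)\notin Asym(\sigma_{\varphi,\mathfrak{w}})$; in particular $x^E\neq x^F$, so $E\mapsto x^E$ is injective and $S$ is uncountable. Meanwhile Lemma~\ref{ABC30} gives $(x^E,x^F)\in Prox(\sigma_{\varphi,\mathfrak{w}})$. Hence every pair of distinct points of $S$ is proximal but not asymptotic, i.e. scrambled, so $S$ is an uncountable scrambled set and $(R^\Gamma,\sigma_{\varphi,\mathfrak{w}})$ is Li--Yorke chaotic.

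The remaining implications are immediate. $(4)\Rightarrow(3)$ holds because an uncountable scrambled set contains two distinct points, which by definition form a scrambled pair. $(3)\Rightarrow(2)$ is the contrapositive of Lemma~\ref{salam-yorke-10}: if $(2)$ fails then every $\alpha\in\Gamma$ is either a quasi-periodic point of $\varphi$ or satisfies $\mathfrak{w}_{\varphi^n(\alpha)}=0$ for some $n\geq0$, whence $Prox(\sigma_{\varphi,\mathfrak{w}})=Asym(\sigma_{\varphi,\mathfrak{w}})$ and no scrambled pair can exist. This closes the cycle.

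Since every step quotes an already-established result, there is no genuine obstacle at the level of the theorem; the one place that rewards attention is the passage from ``$E\neq F$'' to ``$x^E\neq x^F$'', and hence to the uncountability of $S$. This is exactly why one wants an \emph{almost disjoint} family in Remark~\ref{salam-yorke-20}: it forces $E\setminus F$ to be infinite for all distinct $E,F$, which is precisely the hypothesis under which Lemma~\ref{ABC10} separates $x^E$ from $x^F$ (mere distinctness of $E$ and $F$ would not suffice).
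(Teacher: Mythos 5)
Your proposal is correct and follows essentially the same route as the paper: $(1)\Leftrightarrow(2)$ via Corollary~\ref{salam-sen-30}, $(3)\Rightarrow(2)$ as the contrapositive of Lemma~\ref{salam-yorke-10}, and $(2)\Rightarrow(4)$ by combining Remark~\ref{salam-yorke-20} with Lemmas~\ref{ABC10} and~\ref{ABC30} to produce the uncountable scrambled set $\{x^E:E\in\mathcal{E}\}$. Your explicit remark that almost-disjointness is what makes $E\setminus F$ infinite, and hence $x^E\neq x^F$ so that the set is genuinely uncountable, is a detail the paper leaves implicit, but it is not a different argument.
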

\begin{proof}
``${\bf (1\Leftrightarrow2)}$'' By Corollary~\ref{salam-sen-30}, (1) and (2) are equivalent.
\\
``${\bf (4\Rightarrow3)}$'' It's clear that (4) implies (3).
\\
``${\bf (3\Rightarrow2)}$'' Note that all elements of $Prox(\sigma_{\varphi,\mathfrak{w}})\setminus
Asym(\sigma_{\varphi,\mathfrak{w}})$ are scrambled pairs, thus (3) is equivalent to
$Prox(\sigma_{\varphi,\mathfrak{w}})\not\subseteq
Asym(\sigma_{\varphi,\mathfrak{w}})$, which implies (2) by Lemma~\ref{salam-yorke-10}.
\\
 ``${\bf (2\Rightarrow4)}$'' Suppose $\nu$ is a
 non--quasi--periodic point of $\varphi$ such that for all $n\geq0$,
    $\mathfrak{w}_{\varphi^n(\nu)}\neq0$.
By Remark~\ref{salam-yorke-20} there exists uncountable collection $\mathcal E$ of infinite subsets of $\mathbb N$
such that for all distinct $C,D\in\mathcal{E}$, $C\cap D$ is finite. Using notations of
Subsection~\ref{ABC}, let $Y:=\{x^A:A\in\mathcal{E}\}$,
then by Lemmas~\ref{ABC10},~\ref{ABC30},
 for each distinct $C,D\in\mathcal{E}$ we have $(x^C,x^D)\in Prox(\sigma_{\varphi,\mathfrak{w}})\setminus
Asym(\sigma_{\varphi,\mathfrak{w}})$, i.e., $x^C,x^D$ are scrambled, hence $Y$ is an uncountable scrambled
subset of $R^\Gamma$ and $(R^\Gamma,\sigma_{\varphi,\mathfrak{w}})$ is Li--Yorke chaotic.
\end{proof}
\section{When does $(R^\Gamma,\sigma_{\varphi,{\mathfrak w}})$ have dense periodic points?}
\noindent The following theorem is the main and unique theorem of this section.
\begin{theorem}\label{salam-dense-10}
For $\sigma_{\varphi,{\mathfrak w}}:R^\Gamma\to R^\Gamma$ the following statements are equivalent:
\begin{itemize}
\item[1.] $\sigma_{\varphi,{\mathfrak w}}:R^\Gamma\to R^\Gamma$ is onto,
\item[2.] $\varphi:\Gamma\to\Gamma$ is one--to--one and for all $\alpha\in\Gamma$, $\mathfrak{w}_\alpha$ is invertible,
\item[3.] $Per(\sigma_{\varphi,{\mathfrak w}})$ is dense in $R^\Gamma$.
\end{itemize}
\end{theorem}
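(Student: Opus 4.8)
The plan is to establish the cycle $(1)\Rightarrow(2)\Rightarrow(3)\Rightarrow(1)$, since each implication has a clean structural description. For $(1)\Rightarrow(2)$, I would argue contrapositively. If $\varphi$ is not one-to-one, pick $\alpha\neq\beta$ with $\varphi(\alpha)=\varphi(\beta)$; then in any image point $\sigma_{\varphi,\mathfrak w}(x)$ the coordinates at $\alpha$ and $\beta$ are $\mathfrak w_\alpha x_{\varphi(\alpha)}$ and $\mathfrak w_\beta x_{\varphi(\beta)}$, and since $R$ is a finite ring (not necessarily a field in this theorem) one can choose a target point whose $\alpha$- and $\beta$-coordinates cannot simultaneously be hit — e.g. use that $\{\mathfrak w_\alpha r:r\in R\}$ and $\{\mathfrak w_\beta r:r\in R\}$ are subgroups of $(R,+)$ and pick coordinates outside the relevant cosets, noting $0$ always lies in each so at worst the coordinates are forced equal when one of the weights is a zero divisor. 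If instead some $\mathfrak w_{\alpha_0}$ is not invertible, then $\{\mathfrak w_{\alpha_0}r:r\in R\}$ is a proper subset of $R$, so any point whose $\alpha_0$-coordinate lies outside this set is not in the image. Either way $\sigma_{\varphi,\mathfrak w}$ fails to be onto.

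For $(2)\Rightarrow(3)$: assuming $\varphi$ injective and every $\mathfrak w_\alpha$ invertible, I would show periodic points are dense by approximating an arbitrary $x=(x_\alpha)_{\alpha\in\Gamma}$ on an arbitrary basic open set $\bigcap_{\alpha\in L}U(\alpha,x_\alpha)$ with $L\subseteq\Gamma$ finite. The key is the structure of the equivalence classes of $\thicksim_\varphi$ described in the Definition preceding Subsection \ref{ABC}: since $\varphi$ is injective, each class is either a finite cycle, a "one-sided" orbit $\{\varphi^n(\alpha):n\ge0\}$ with injective sequence, or a "two-sided" orbit indexed by $\mathbb Z$. On a finite cycle I simply leave the coordinates as in $x$ (they are automatically periodic under the shift, using invertibility of the weights around the cycle and the fact that the product of all weights around a cycle is a unit, so after adjusting one can in fact match $x$ exactly or at least on the finitely many coordinates in $L$). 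On an infinite orbit meeting $L$, I truncate: on the (finitely many) relevant coordinates I keep $x$, and then periodize by declaring the orbit to "wrap around" after a large index $N$ past everything in $L$ — concretely, define $y_{\varphi^n(\alpha)}$ for $n\ge N$ by back-solving the periodicity equations $\sigma_{\varphi,\mathfrak w}^p(y)=y$ using that all weights are invertible, which is exactly where injectivity of $\varphi$ and invertibility of the weights are needed to make the recursion well-defined and consistent. On classes disjoint from $L$, set coordinates to $0$. This produces a periodic point agreeing with $x$ on $L$.

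For $(3)\Rightarrow(1)$: this is the soft part. $\mathrm{Per}(\sigma_{\varphi,\mathfrak w})\subseteq\sigma_{\varphi,\mathfrak w}(R^\Gamma)$ since a periodic point $x$ with $\sigma^p(x)=x$ equals $\sigma(\sigma^{p-1}(x))$, so the image contains a dense set; and $R^\Gamma$ is compact, $\sigma_{\varphi,\mathfrak w}$ is continuous, hence the image is compact, hence closed, hence all of $R^\Gamma$. The main obstacle I anticipate is the careful bookkeeping in $(2)\Rightarrow(3)$ on the two-sided infinite orbits: one must periodize without creating conflicts at coordinates in $L$, and verify that the resulting point is genuinely fixed by some power of $\sigma_{\varphi,\mathfrak w}$ — this requires choosing the period $p$ as a common multiple handling all the finitely many orbit-classes touched by $L$ simultaneously, and checking that the weight-product conditions (products over a period of length $p$ along each relevant orbit) are consistent, which again reduces to invertibility of each individual $\mathfrak w_\alpha$.
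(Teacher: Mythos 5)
Your plan is essentially correct, and the hard implication $(2)\Rightarrow(3)$ follows the same route as the paper: decompose $\Gamma$ into $\thicksim_\varphi$-classes (finite cycles, one-sided orbits, two-sided orbits), keep the prescribed coordinates on the finite set $L$, back-solve the periodicity recursion along the infinite orbits using invertibility of the weights, set everything else to $0$, and take the period to be a common multiple over the finitely many classes met by $L$ (the paper uses $T=k\prod_{\theta\in A}n_\theta$ with $k$ the order of the unit group of $R$, which handles the finite cycles since $u^k=1$ for every unit $u$). Where you genuinely diverge is in the logical organization: the paper proves $(1)\Leftrightarrow(2)$ and $(2)\Leftrightarrow(3)$ by four explicit constructions, including a direct $(3)\Rightarrow(2)$ that extracts injectivity of $\varphi$ and invertibility of the $\mathfrak{w}_\alpha$ from a periodic point lying in a suitable basic open set; your cycle $(1)\Rightarrow(2)\Rightarrow(3)\Rightarrow(1)$ replaces that step with the soft observation that $Per(\sigma_{\varphi,\mathfrak{w}})\subseteq\sigma_{\varphi,\mathfrak{w}}(R^\Gamma)$ and that the image is closed by compactness, hence onto. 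This is correct and shorter (and is in fact the same compactness trick the paper deploys later in Lemma~\ref{salam-tran-10}); the price is that you lose the explicit preimage construction of $(2)\Rightarrow(1)$, which costs nothing here. One spot to tighten: in $(1)\Rightarrow(2)$ your coset language for the non-injective case is muddled --- the clean statement is that if $\varphi(\alpha)=\varphi(\beta)$ with $\alpha\neq\beta$, then the pair of coordinates at $(\alpha,\beta)$ of any image point ranges over $\{(\mathfrak{w}_\alpha r,\mathfrak{w}_\beta r):r\in R\}$, a set of at most $|R|<|R|^2$ elements, so some target is missed; equivalently (as the paper does, after first securing invertibility of all weights from the preimage of $(1)_{\alpha\in\Gamma}$) the target with $1$ at $\alpha$ and $0$ at $\beta$ forces $\mathfrak{w}_\alpha r=1$ and $\mathfrak{w}_\beta r=0$ for the same $r$, a contradiction.
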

\begin{proof}
``${\bf (1\Rightarrow2)}$'': Suppose $\sigma_{\varphi,{\mathfrak
w}}:R^\Gamma\to R^\Gamma$ is onto. There exists
$(x_\alpha)_{\alpha\in\Gamma}\in R^\Gamma$ such that
$(1)_{\alpha\in\Gamma}=\sigma_{\varphi,{\mathfrak w}}
((x_\alpha)_{\alpha\in\Gamma})=({\mathfrak w}_\alpha
x_\alpha)_{\alpha\in\Gamma}$, hence for all $\alpha\in\Gamma$ we
have ${\mathfrak w}_\alpha x_\alpha=1$ and ${\mathfrak w}_\alpha$
is invertible.
\\
For distinct $\theta,\lambda\in\Gamma$ choose
$(z_\alpha)_{\alpha\in\Gamma}\in R^\Gamma$ with $z_\theta=1$ and $z_\lambda=0$. There exists
$(y_\alpha)_{\alpha\in\Gamma}\in R^\Gamma$ with $(z_\alpha)_{\alpha\in\Gamma}=
\sigma_{\varphi,{\mathfrak w}}((y_\alpha)_{\alpha\in\Gamma})=(\mathfrak{w}_\alpha y_\alpha)_{\alpha\in\Gamma}$,
hence $1=z_\theta=\mathfrak{w}_\theta y_{\varphi(\theta)}$ and $0=z_\lambda=\mathfrak{w}_\lambda y_{\varphi(\lambda)}$
which leads to $y_{\varphi(\theta)}\neq0$ and $y_{\varphi(\lambda)}=0$, hence $y_{\varphi(\theta)}\neq y_{\varphi(\lambda)}$
and $\varphi(\theta)\neq\varphi(\lambda)$. Therefore $\varphi:\Gamma\to\Gamma$ is one--to--one.
\\
``${\bf (2\Rightarrow1)}$'': Suppose (2) is valid. Consider $(x_\alpha)_{\alpha\in\Gamma}\in R^\Gamma$ and let:
\[z_\alpha:=\left\{\begin{array}{lc} 0 & \alpha\notin\varphi(\Gamma)\:, \\ {\mathfrak w}_\beta^{-1}x_\beta &
\beta\in\Gamma,\alpha=\varphi(\beta)\:,\end{array}\right.\]
then $\sigma_{\varphi,{\mathfrak w}}((z_\alpha)_{\alpha\in\Gamma})=(x_\alpha)_{\alpha\in\Gamma}$.
\\
``${\bf (3\Rightarrow2)}$'': $Per(\sigma_{\varphi,{\mathfrak w}})$ is dense in $R^\Gamma$.
Choose distinct $\theta,\lambda\in\Gamma$ and let:
\[V_\alpha=\left\{\begin{array}{lc} \{0\} & \alpha=\lambda\:, \\ \{1\} & \alpha=\theta\:, \\
    R & \alpha\neq\theta,\lambda\:,\end{array}\right.\]
then $V:=\mathop{\prod}\limits_{\alpha\in\Gamma}V_\alpha$ is a nonempty open subset of $R^\Gamma$, hence by
hypothesis (3) there exists $(u_\alpha)_{\alpha\in\Gamma}\in Per(\sigma_{\varphi,{\mathfrak w}})\cap V$. There exists
$n\geq 1$ with $(u_\alpha)_{\alpha\in\Gamma}=\sigma_{\varphi,{\mathfrak w}}^n((u_\alpha)_{\alpha\in\Gamma})=
({\mathfrak w}_\alpha{\mathfrak w}_{\varphi(\alpha)}\cdots{\mathfrak w}_{\varphi^{n-1}(\alpha)}u_{\varphi^n(\alpha)})_{
\alpha\in\Gamma}$ which leads to
\[{\mathfrak w}_\theta{\mathfrak w}_{\varphi(\theta)}\cdots{\mathfrak w}_{\varphi^{n-1}(\theta)}u_{\varphi^n(\theta)}=1\:,
\tag{*}\]
\[{\mathfrak w}_\lambda{\mathfrak w}_{\varphi(\lambda)}\cdots{\mathfrak w}_{\varphi^{n-1}(\lambda)}u_{\varphi^n(\lambda)}
=0\: . \tag{**}\]
Using (*), ${\mathfrak w}_\theta$ is invertible and $u_{\varphi^n(\theta)}\neq 0$, however by a similar method for all
$\alpha\in\Gamma$, ${\mathfrak w}_\alpha$ is invertible. Since for all $\alpha\in\Gamma$,  ${\mathfrak w}_\alpha$ is invertible
by (**), $u_{\varphi^n(\lambda)}=0$. Therefore $u_{\varphi^n(\lambda)}=0\neq u_{\varphi^n(\theta)}$ which leads to
$\varphi(\lambda)\neq\varphi(\theta)$ and $\varphi:\Gamma\to\Gamma$ is one--to--one.
\\
``${\bf (2\Rightarrow3)}$'': Suppose (2) is valid. If $\alpha_1,\ldots,\alpha_p\in\Gamma$ are distinct and $r_1,\ldots,r_p\in R$
we should prove
$(\mathop{\bigcap}\limits_{1\leq i\leq p}U(\alpha_i,r_i))\cap Per(\sigma_{\varphi,{\mathfrak w}})\neq\varnothing$.
For $\beta\in\{\alpha_1,\ldots,\alpha_p\}$, we have following cases (see Fig. 1):
\begin{itemize}
\item[case a.] if  $\frac{\beta}{\thicksim_\varphi}$ is finite, let $A_\beta=\frac{\beta}{\thicksim_\varphi}$ (i.e., $\beta$ is periodic),
\item[case b.] if there exists $\alpha\in\Gamma$ with infinite $\frac{\beta}{\thicksim_\varphi}=\{\varphi^n(\alpha):n\geq0\}$
    and $\beta=\varphi^m(\alpha)$ for $m\geq0$, let $A_\beta=\{\varphi^n(\alpha):0\leq n\leq m\}$,
\item[case c.] if for all $n\in\mathbb{Z}$  we have $\varphi^n(\beta)\neq\varnothing$, suppose $\frac{\beta}{\thicksim_\varphi}\cap
    \{\alpha_1,\ldots,\alpha_p\}$ is equal to $\{\varphi^{t_1}(\beta),\ldots,\varphi^{t_s}(\beta)\}$ with $t_1<t_2<\cdots<t_s$ let
    $A_\beta=\{\varphi^i(\beta):t_1\leq i\leq t_s\}$.
\end{itemize}
Let $A=\bigcup\{A_{\alpha_i}:1\leq i\leq p\}$, then $A$ is a finite subset of $\Gamma$. Choose arbitrary
$(x_\alpha)_{\alpha\in\Gamma}\in \mathop{\bigcap}\limits_{1\leq i\leq p}U(\alpha_i,r_i)$. For $\alpha\in\Gamma$
we have the following cases:
\begin{itemize}
\item if $\alpha\in\Gamma\setminus\bigcup\{\frac{\beta}{\thicksim_\varphi}:\beta\in A\}$, then let $y_\alpha=0$,
\item if there exists periodic point $\theta\in A$ with $\alpha\in \frac{\theta}{\thicksim_\varphi}$, then $\alpha\in A_\theta\subseteq A$,
    let $y_\alpha:=x_\alpha$,
\item if there exists non--periodic point $\theta\in A$ with
    $\alpha\in \frac{\theta}{\thicksim_\varphi}=\{\varphi^m(\theta):m\geq0\}$, then there exists $n\geq1$ with
    $\theta,\varphi(\theta),\ldots,\varphi^{n-1}(\theta)\in A$ and $\varphi^{n}(\theta),\varphi^{n+1}(\theta),\ldots\notin
    A$. Let $y_{\varphi^i(\theta)}=x_{\varphi^i(\theta)}$ for $i=0,\ldots,n-1$ and
    \[y_{\varphi^{n+j}(\theta)}=(\mathfrak{w}_{\varphi^j(\theta)}\mathfrak{w}_{\varphi^{j+1}(\theta)}\cdots\mathfrak{w}_{\varphi^{j+n-1}(\theta)})^{-1}y_{\varphi^{j}(\theta)}\tag{+}\]
for all $j\geq0$.
\item if there exists non--periodic point $\mu\in A$ with $\varphi^m(\mu)\neq\varnothing$ for all $m\in{\mathbb Z}$ and
    $\alpha\in \frac{\mu}{\thicksim_\varphi}=\{\varphi^m(\mu):m\in{\mathbb Z}\}$, then there exists
    $\theta\in\frac{\mu}{\thicksim_\varphi}$ and $n\geq1$ with
    $\theta,\varphi(\theta),\ldots,\varphi^{n-1}(\theta)\in A$ and $\varphi^{n}(\theta),\varphi^{n+1}(\theta),\ldots\notin A$,
    also $\varphi^{-1}(\theta),\varphi^{-2}(\theta),\ldots\notin A$.
    Let $y_{\varphi^i(\theta)}=x_{\varphi^i(\theta)}$ for $i=0,\ldots,n-1$ and (+)
    for all $j\in\mathbb{Z}$.
\end{itemize}
For $\theta\in A$, let
$n_\theta=|\frac{\theta}{\thicksim_\varphi}\cap A|$ and
$T=\:|\{r\in R\setminus\{0\}:r$ is invertible
$\}|\:\mathop{\prod}\limits_{\theta\in A}n_\theta$, then
$y=(y_\alpha)_{\alpha\in\Gamma}\in \mathop{\bigcap}\limits_{1\leq
i\leq p}U(\alpha_i,r_i)$ and
$\sigma_{\varphi,\mathfrak{w}}^T(y)=y$, since for all
$\alpha\in\Gamma$, we have the following cases:
\begin{itemize}
\item $\alpha\notin \bigcup\{\frac{\beta}{\thicksim_\varphi}:\beta\in A\}$. In this case for all $i$, we have
    $\varphi^i(\alpha)\notin\bigcup\{\frac{\beta}{\thicksim_\varphi}:\beta\in A\}$, hence $y_\alpha=y_{\varphi^i(\alpha)}=
    y_{\varphi^T(\alpha)}=0$, hence:
    \[y_\alpha=0=\mathfrak{w}_\alpha\mathfrak{w}_{\varphi(\alpha)}\cdots\mathfrak{w}_{\varphi^{T-1}(\alpha)}y_{\varphi^{T}(\alpha)}\]
\item $\alpha\in \bigcup\{\frac{\beta}{\thicksim_\varphi}:\beta\in A\}$ is not periodic,
    in this case there exists non--periodic point $\theta\in A$ and $j$ such that $\alpha\in\frac{\theta}{\thicksim_\varphi}$,
    $\varphi^j(\theta)=\alpha$ and (+) is valid for $n=n_\theta$, so (+) is valid for all multiplications of $n_\theta$
    like $T$, therefore
    \[y_{\varphi^j(\theta)}=\mathfrak{w}_{\varphi^j(\theta)}\mathfrak{w}_{\varphi^{j+1}(\theta)}\cdots\mathfrak{w}_{\varphi^{T+j-1}(\theta)}y_{\varphi^{T+j}(\theta)}\]
    hence:
    \[y_\alpha=\mathfrak{w}_\alpha\mathfrak{w}_{\varphi(\alpha)}\cdots\mathfrak{w}_{\varphi^{T-1}(\alpha)}y_{\varphi^{T}(\alpha)}\tag{++}\]
\item $\alpha\in \bigcup\{\frac{\beta}{\thicksim_\varphi}:\beta\in A\}$ is periodic,
    in this case there exists a periodic point $\theta\in A$ such that $\alpha\in\frac{\beta}{\thicksim_\varphi}\subseteq A$.
    Moreover $\varphi^{n_\alpha}(\alpha)=\alpha$, so for $k=\:|\{r\in R\setminus\{0\}:r$ is invertible$\}|$
    we have $\varphi^{kn_\alpha}(\alpha)=\alpha$ and for all invertible elements $r\in R\setminus\{0\}$, $r^k=1$, in
    particular $\mathfrak{w}_\psi^k=1$ for all $\psi\in\Gamma$, hence we have:
    \begin{eqnarray*}
    \mathfrak{w}_\alpha\mathfrak{w}_{\varphi(\alpha)}\cdots\mathfrak{w}_{\varphi^{kn_\alpha-1}(\alpha)}y_{\varphi^{kn_\alpha}(\alpha)} & = & \mathfrak{w}_\alpha\mathfrak{w}_{\varphi(\alpha)}\cdots\mathfrak{w}_{\varphi^{kn_\alpha-1}(\alpha)}y_\alpha \\
    & = & (\mathfrak{w}_\alpha\mathfrak{w}_{\varphi(\alpha)}\cdots\mathfrak{w}_{\varphi^{n_\alpha-1}(\alpha)})^ky_\alpha \\
    & = & \mathfrak{w}_\alpha^k\mathfrak{w}_{\varphi(\alpha)}^k\cdots\mathfrak{w}_{\varphi^{n_\alpha-1}(\alpha)}^ky_\alpha \\
    & = & y_\alpha
    \end{eqnarray*}
and $\mathfrak{w}_\alpha\mathfrak{w}_{\varphi(\alpha)}\cdots\mathfrak{w}_{\varphi^{m-1}(\alpha)}y_{\varphi^{m}(\alpha)}=y_\alpha$ for $m=kn_\alpha$ and all of multiplations of $kn_\alpha$ like $T$, hence again we have (++).
\end{itemize}
Using above cases (++) is valid for all $\alpha\in\Gamma$ and:
\[\sigma^T_{\varphi,\mathfrak{w}}((y_\alpha)_{\alpha\in\Gamma})=(\mathfrak{w}_\alpha\mathfrak{w}_{\varphi(\alpha)}\cdots\mathfrak{w}_{\varphi^{T-1}(\alpha)}y_{\varphi^{T}(\alpha)})_{\alpha\in\Gamma}=(y_\alpha)_{\alpha\in\Gamma}\]
Therefore $\mathop{\bigcap}\limits_{1\leq i\leq p}U(\alpha_i,r_i)\cap Per(\sigma_{\varphi,\mathfrak{w}})\neq\varnothing$.
\end{proof}
\begin{center}
\begin{tabular}{|c|c|}
& case \\
\hline
{\tiny $\underbrace{\xymatrix{\varphi(\beta)\ar[r] &\varphi^2(\beta)\ar[r] & \cdots\ar[r] & \varphi^j(\beta)\ar[dlll] \\
\beta=\varphi^{j+1}(\beta) \ar[u] &&& }}_{\frac{\beta}{\thicksim_\varphi}=A_\beta}$}
& a \\ \hline
{\tiny $\underbrace{\xymatrix{\alpha\ar[r]\ar@{-}[d] &\varphi(\alpha)\ar[r] & \cdots\ar[r] & \varphi^m(\alpha)=\beta\ar[r]\ar@{-}[d]&\varphi^{m+1}(\alpha)\ar[r] & \cdots \\
\ar@{-}[r] & A_\beta \ar@{-}[rr]&  &&&}}_{\frac{\beta}{\thicksim_\varphi} }$}
& b \\ \hline
{\tiny $\underbrace{\xymatrix{\cdots\ar[r] & \varphi^{t_1-1}(\beta)\ar[r] & \varphi^{t_1}(\beta)\ar[r]\ar@{-}[d] & \varphi^{t_1+1}(\beta)\ar[r] &\cdots\ar[r]  & \varphi^{t_s}(\beta)\ar[r]\ar@{-}[d]&\varphi^{t_s+1}(\beta)\ar[r] & \cdots \\
& & \ar@{-}[r] & A_\beta \ar@{-}[rr]&  &&&}}_{\frac{\beta}{\thicksim_\varphi} }$}
& c \\ \hline
\end{tabular} \\ $\:$  \\ Fig. 1 \end{center}
\section{Devaney chaotic $(R^\Gamma,\sigma_{\varphi,{\mathfrak w}})$, for finite field $R$}
\noindent We say the dynamical system $(X,f)$ is topological
transitive if for each nonempty open subsets $U,V$ of $X$, there
exists $n\geq1$
    with $f^n(U)\cap V\neq\varnothing$. Moreover
$(X,f)$ with compact Hausdorff $X$  is Devaney chaotic if it is sensitive, topological transitive, and $Per(f)$ is dense in X.
\\
In this section we prove that
 $(\sigma_{\varphi,\mathfrak{w}},R^\Gamma)$ is Devaney chaotic (topological transitive) if and only if
$\varphi$ is one--to--one without any periodic point and $\mathfrak{w}_\alpha$ is invertible for each $\alpha\in\Gamma$.
\begin{lemma}\label{salam-tran-10}
Weighted generalized shift $(R^\Gamma,\sigma_{\varphi,\mathfrak{w}})$ is topological transitive
if and only if $\varphi$ is one--to--one without periodic points and for all $\alpha\in\Gamma$,
$\mathfrak{w}_\alpha$ is invertible.
\end{lemma}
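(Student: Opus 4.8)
\noindent The plan is to prove the two implications of the ``if and only if'' separately.

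For the ``if'' direction, assume $\varphi$ is one--to--one without periodic points and every $\mathfrak{w}_\alpha$ is invertible. The first thing I would record is that every forward orbit $\{\varphi^n(\alpha)\}_{n\geq0}$ is then a one--to--one sequence: an equality $\varphi^i(\alpha)=\varphi^j(\alpha)$ with $i<j$ would, by injectivity of $\varphi$, give $\varphi^{j-i}(\alpha)=\alpha$, making $\alpha$ periodic. Since transitivity need only be checked on a base, it suffices to treat basic open sets $U=\bigcap_{i=1}^m U(\alpha_i,r_i)$ (the $\alpha_i$ pairwise distinct) and $V=\bigcap_{j=1}^k U(\beta_j,s_j)$ (the $\beta_j$ pairwise distinct). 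For each fixed pair $(i,j)$ the equation $\varphi^n(\beta_j)=\alpha_i$ has at most one solution $n$, because the orbit of $\beta_j$ is an injective sequence; hence all but finitely many $n\geq1$ make the finite set $\{\varphi^n(\beta_1),\dots,\varphi^n(\beta_k)\}$ disjoint from $\{\alpha_1,\dots,\alpha_m\}$. I would fix such an $n$, note that $\varphi^n$ is injective so the $\varphi^n(\beta_j)$ are pairwise distinct, and then define $x=(x_\alpha)_{\alpha\in\Gamma}\in R^\Gamma$ by $x_{\alpha_i}=r_i$, by $x_{\varphi^n(\beta_j)}=(\mathfrak{w}_{\beta_j}\mathfrak{w}_{\varphi(\beta_j)}\cdots\mathfrak{w}_{\varphi^{n-1}(\beta_j)})^{-1}s_j$ (a legitimate value, as a product of invertible elements is invertible), and $x_\alpha=0$ for all remaining $\alpha$; these prescriptions do not conflict, so $x$ is well defined, $x\in U$, and the formula for $\sigma_{\varphi,\mathfrak{w}}^n$ gives $\sigma_{\varphi,\mathfrak{w}}^n(x)_{\beta_j}=\mathfrak{w}_{\beta_j}\cdots\mathfrak{w}_{\varphi^{n-1}(\beta_j)}x_{\varphi^n(\beta_j)}=s_j$ for every $j$, i.e. $\sigma_{\varphi,\mathfrak{w}}^n(x)\in V$. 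Thus $\sigma_{\varphi,\mathfrak{w}}^n(U)\cap V\neq\varnothing$.

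For the ``only if'' direction, suppose $(R^\Gamma,\sigma_{\varphi,\mathfrak{w}})$ is topologically transitive. I would first observe that transitivity forces $\sigma_{\varphi,\mathfrak{w}}$ to be onto: since $R^\Gamma$ is compact Hausdorff and $\sigma_{\varphi,\mathfrak{w}}$ is continuous, $\sigma_{\varphi,\mathfrak{w}}(R^\Gamma)$ is closed, and if it were a proper subset its complement would be a nonempty open set meeting no $\sigma_{\varphi,\mathfrak{w}}^n(R^\Gamma)\supseteq\sigma_{\varphi,\mathfrak{w}}^n(U)$ with $n\geq1$, contradicting transitivity. Surjectivity then yields, via the implication $(1)\Rightarrow(2)$ of Theorem~\ref{salam-dense-10}, that $\varphi$ is one--to--one and every $\mathfrak{w}_\alpha$ is invertible. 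It remains to exclude periodic points of $\varphi$, and for this I would argue directly: if $\varphi^p(\theta)=\theta$ with $p\geq1$, set $C=\{\theta,\varphi(\theta),\dots,\varphi^{p-1}(\theta)\}$ and consider the nonempty open sets $U=\{x\in R^\Gamma:x_\alpha=0\text{ for all }\alpha\in C\}$ and $V=U(\theta,1)$; then for every $x\in U$ and every $n\geq1$ one has $\varphi^n(\theta)\in C$, so $\sigma_{\varphi,\mathfrak{w}}^n(x)_\theta=\mathfrak{w}_\theta\cdots\mathfrak{w}_{\varphi^{n-1}(\theta)}x_{\varphi^n(\theta)}=0\neq1$, whence $\sigma_{\varphi,\mathfrak{w}}^n(U)\cap V=\varnothing$ for all $n\geq1$, contradicting transitivity. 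Hence $\varphi$ has no periodic point.

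I expect the only mildly delicate point to be the coordinate bookkeeping in the ``if'' direction --- arranging $n$ so the two finite index sets $\{\alpha_i\}$ and $\{\varphi^n(\beta_j)\}$ are disjoint, and checking the explicit point simultaneously lies in $U$ and in $\sigma_{\varphi,\mathfrak{w}}^{-n}(V)$ --- but this becomes routine once the forward orbits are known to be injective sequences. The two remaining ingredients of the ``only if'' direction (surjectivity from compactness, and the absence of periodic points from the explicit $U,V$) are short and self--contained.
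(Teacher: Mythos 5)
Your proof is correct and follows essentially the same route as the paper's: the ``only if'' direction uses surjectivity obtained from compactness together with Theorem~\ref{salam-dense-10} and the same periodic-orbit obstruction (constant $0$ on the orbit versus $1$ at $\theta$), while the ``if'' direction rests on the same key observation that for injective, periodic-point-free $\varphi$ the sets $\{\varphi^n(\beta_j)\}_j$ eventually avoid $\{\alpha_1,\dots,\alpha_m\}$, with invertibility of the weights used to solve for the required coordinates. The only cosmetic difference is that you exhibit an explicit point of $U\cap\sigma_{\varphi,\mathfrak{w}}^{-n}(V)$, whereas the paper writes $\sigma^p_{\varphi,\mathfrak{w}}(U)\cap V$ as a product of nonempty factors for all sufficiently large $p$.
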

\begin{proof}
``$\Rightarrow$'' Suppose
$(R^\Gamma,\sigma_{\varphi,\mathfrak{w}})$ is topological
transitive, for each nonempty open subset $U$ of $R^\Gamma$,
there exists $n\geq1$ such that
$U\cap\sigma_{\varphi,\mathfrak{w}}^n (R^\Gamma)\neq
\varnothing$, thus $U\cap
\sigma_{\varphi,\mathfrak{w}}(R^\Gamma)\neq\varnothing$. Hence
$\sigma_{\varphi,\mathfrak{w}}(R^\Gamma)$ is dense in $R^\Gamma$.
Since $R^\Gamma$ is compact Hausdorff and
$\sigma_{\varphi,\mathfrak{w}}:R^\Gamma\to R^\Gamma$ is
continuous, $\sigma_{\varphi,\mathfrak{w}}(R^\Gamma)$ is a closed
subset of $R^\Gamma$ which leads to
$\sigma_{\varphi,\mathfrak{w}}(R^\Gamma)=\overline{
\sigma_{\varphi,\mathfrak{w}}(R^\Gamma)}=R^\Gamma$. Therefore
$\sigma_{\varphi,\mathfrak{w}}:R^\Gamma\to R^\Gamma$ is onto and
by Theorem~\ref{salam-dense-10}, $\varphi$ is one--to--one and
all $\mathfrak{w}_\alpha$ s are invertible.
\\
We prove $\varphi$ does not have any periodic point. Suppose $\theta\in Per(\varphi)$, consider
following nonempty open subsets of $R^\Gamma$ (note that $\{\varphi^n(\theta):n\geq0\}$ is a finite subset of $\Gamma$):
\[U:=\mathop{\bigcap}\limits_{\alpha\in\{\varphi^n(\theta):n\geq0\}}U(\alpha,1)\:,\:
V:=\mathop{\bigcap}\limits_{\alpha\in\{\varphi^n(\theta):n\geq0\}}U(\alpha,0)\:.\]
For all $n\geq1$ and $(x_\alpha)_{\alpha\in\Gamma}$, we have:
\begin{eqnarray*}
(x_\alpha)_{\alpha\in\Gamma}\in V
    & \mathop{\Rightarrow}\limits^{V\subseteq U(\varphi^n(\theta),0)}
    & (x_\alpha)_{\alpha\in\Gamma}\in U(\varphi^n(\theta),0) \\
& \Rightarrow & x_{\varphi^n(\theta)}=0 \\
& \Rightarrow &
    \mathfrak{w}_\theta\mathfrak{w}_{\varphi(\theta)}\cdots\mathfrak{w}_{\varphi^{n-1}(\theta)}x_{\varphi^n(\theta)}=0 \\
& \Rightarrow &
    (\mathfrak{w}_\alpha\mathfrak{w}_{\varphi(\alpha)}\cdots\mathfrak{w}_{\varphi^{n-1}(\alpha)}x_{\varphi^n(\alpha)})_{
    \alpha\in\Gamma}\notin U(\theta,1) \\
& \Rightarrow & \sigma^n_{\varphi,\mathfrak{w}}((x_\alpha)_{\alpha\in\Gamma})\notin U(\theta,1) \\
& \mathop{\Rightarrow}\limits^{U(\theta,1)\supseteq U} &
    \sigma^n_{\varphi,\mathfrak{w}}((x_\alpha)_{\alpha\in\Gamma})\notin U
\end{eqnarray*}
therefore $\sigma^n_{\varphi,\mathfrak{w}}(V)\cap U=\varnothing$ for all $n\geq0$ which is in contradiction with
topological transitivity of $(R^\Gamma,\sigma_{\varphi,\mathfrak{w}})$, hence $Per(\varphi)=\varnothing$.
\\
``$\Leftarrow$'' Suppose $\varphi$ is one--to--one without any periodic point and $\mathfrak{w}_\alpha$ s are invertible.
Suppose $U,V$ are nonempty open subsets of $R^\Gamma$, then:
\\
$\bullet$ there exist $r_1,\ldots,r_n\in R$ and distinct $\alpha_1,\ldots,\alpha_n\in\Gamma$ with
    $\mathop{\bigcap}\limits_{i=1}^n U(\alpha_i,r_i)\subseteq U$,
\\
$\bullet$ there exist $s_1,\ldots,s_m\in R$ and distinct $\beta_1,\ldots,\beta_m\in\Gamma$ with
    $\mathop{\bigcap}\limits_{i=1}^m U(\beta_i,s_i)\subseteq V$.
\\
We may suppose $\mathop{\bigcap}\limits_{i=1}^n U(\alpha_i,r_i)=\mathop{\prod}\limits_{\alpha\in\Gamma}U_\alpha$
(so for $\alpha\neq\alpha_1,\ldots,\alpha_n$ we have $U_\alpha=R$)
and $\mathop{\bigcap}\limits_{i=1}^m U(\beta_i,s_i)=\mathop{\prod}\limits_{\alpha\in\Gamma}V_\alpha$
(so for $\alpha\neq\beta_1,\ldots,\beta_m$ we have $V_\alpha=R$).
For each $\alpha\in\Gamma$, $\{\varphi^n(\alpha)\}_{n\geq1}$ is a one--to--one sequence, hence there exists
$k_\alpha\geq1$ such that
\[\{\varphi^n(\alpha):n\geq k_\alpha\}\cap\{\alpha_1,\ldots,\alpha_n\}=\varnothing\:.\]
Let $N=\max(k_{\beta_1},\cdots,k_{\beta_m})$, then for each
$p\geq N$ and $\alpha\in\Gamma$, we have the following cases:
\begin{itemize}
\item[i.] $\alpha\neq\beta_1,\ldots,\beta_m$. In this case $V_\alpha=R$, hence:
\begin{eqnarray*}
(\mathfrak{w}_\alpha\cdots\mathfrak{w}_{\varphi^{p-1}(\alpha)}
    U_{\varphi^p(\alpha)})\cap V_\alpha& = &(\mathfrak{w}_\alpha\cdots\mathfrak{w}_{\varphi^{p-1}(\alpha)}
    U_{\varphi^p(\alpha)})\cap R \\
&=& \mathfrak{w}_\alpha\cdots\mathfrak{w}_{\varphi^{p-1}(\alpha)}
    U_{\varphi^p(\alpha)}\neq\varnothing\:.
\end{eqnarray*}
\item[ii.] there exists $j\in\{1,\ldots,m\}$ such that $\alpha=\beta_j$. In this case
    using $p\geq N\geq k_{\beta_j}$     we have $\varphi^p(\alpha)=\varphi^p(\beta_j)\neq\alpha_1,\ldots,\alpha_n$,
    hence $U_{\varphi^p(\alpha)}=R$, therefore (note that for all $\gamma\in\Gamma$, $\mathfrak{w}_\gamma$ is
    invertible which leads to $\mathfrak{w}_\gamma R=R$):
\begin{eqnarray*}
(\mathfrak{w}_\alpha\cdots\mathfrak{w}_{\varphi^{p-1}(\alpha)}
    U_{\varphi^p(\alpha)})\cap V_\alpha & = &
    (\mathfrak{w}_\alpha\cdots\mathfrak{w}_{\varphi^{p-1}(\alpha)}
    R)\cap V_\alpha \\
& = & R\cap V_\alpha=V_\alpha\neq\varnothing\:.
\end{eqnarray*}
\end{itemize}
By the above cases:
\[\forall\alpha\in\Gamma\:\:( (\mathfrak{w}_\alpha\cdots\mathfrak{w}_{\varphi^{p-1}(\alpha)}
    U_{\varphi^p(\alpha)})\cap V_\alpha\neq\varnothing)\:,\]
thus:
\begin{eqnarray*}
\sigma^p_{\varphi,\mathfrak{w}}(U)\cap V & \supseteq &
    \sigma^p_{\varphi,\mathfrak{w}}(\mathop{\bigcap}\limits_{i=1}^n U(\alpha_i,r_i))\cap
    \mathop{\bigcap}\limits_{i=1}^m U(\beta_i,s_i) \\
& = & \sigma^p_{\varphi,\mathfrak{w}}(\mathop{\prod}\limits_{\alpha\in\Gamma}U_\alpha)\cap
    (\mathop{\prod}\limits_{\alpha\in\Gamma}V_\alpha) \\
& = & (\mathop{\prod}\limits_{\alpha\in\Gamma}\mathfrak{w}_\alpha\cdots\mathfrak{w}_{\varphi^{p-1}(\alpha)}
    U_{\varphi^p(\alpha)})\cap (\mathop{\prod}\limits_{\alpha\in\Gamma}V_\alpha) \\
& = & \mathop{\prod}\limits_{\alpha\in\Gamma}((\mathfrak{w}_\alpha\cdots\mathfrak{w}_{\varphi^{p-1}(\alpha)}
    U_{\varphi^p(\alpha)})\cap V_\alpha) \neq\varnothing\:.
\end{eqnarray*}
Hence $\sigma^p_{\varphi,\mathfrak{w}}(U)\cap V\neq\varnothing$, which leads to transitivity of
$\sigma_{\varphi,\mathfrak{w}}:R^\Gamma\to R^\Gamma$.
\end{proof}
\begin{theorem}\label{salam-tran 20}
The following statements are equivalent (see~\cite{dev} for countable $\Gamma$ and generalized shift dynamical system $(\sigma_\varphi,R^\Gamma)$ too):
\begin{itemize}
\item $(\sigma_{\varphi,\mathfrak{w}},R^\Gamma)$ is Devaney chaotic,
\item $(R^\Gamma,\sigma_{\varphi,\mathfrak{w}})$ is topological transitive,
\item $\varphi$ is one--to--one without any periodic point and $\mathfrak{w}_\alpha$ is invertible for each $\alpha\in\Gamma$.
\end{itemize}
\end{theorem}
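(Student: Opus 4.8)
The plan is to deduce the whole theorem from the three results already in place: Corollary~\ref{salam-sen-30} (characterization of sensitivity), Theorem~\ref{salam-dense-10} (onto $\Leftrightarrow$ injectivity plus invertible weights $\Leftrightarrow$ dense periodic points), and Lemma~\ref{salam-tran-10} (characterization of topological transitivity). Note first that $R^\Gamma$ is a product of copies of the finite discrete space $R$, hence compact Hausdorff, so it carries a unique compatible uniform structure (namely $\mathcal{K}$) and the notion of Devaney chaos used in this section is unambiguous. Now only a short cycle of implications is needed. A Devaney chaotic system is topologically transitive by definition, so the first statement implies the second. The second and third statements are literally equivalent, being exactly the two sides of Lemma~\ref{salam-tran-10}. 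Thus the only substantive step is to show that the third statement implies that $(\sigma_{\varphi,\mathfrak{w}},R^\Gamma)$ is Devaney chaotic.

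So assume $\varphi$ is one--to--one without periodic points and every $\mathfrak{w}_\alpha$ is invertible. I would verify the three defining properties in turn. \textbf{Topological transitivity} is precisely the content of Lemma~\ref{salam-tran-10} under these hypotheses. \textbf{Dense periodic points} follows from the implication $(2)\Rightarrow(3)$ of Theorem~\ref{salam-dense-10}, since ``$\varphi$ one--to--one and all $\mathfrak{w}_\alpha$ invertible'' is exactly hypothesis (2) there, giving that $Per(\sigma_{\varphi,\mathfrak{w}})$ is dense in $R^\Gamma$. \textbf{Sensitivity} will follow from Corollary~\ref{salam-sen-30} once a non--quasi--periodic point $\theta\in\Gamma$ with $\mathfrak{w}_{\varphi^n(\theta)}\neq 0$ for all $n\geq 0$ is produced.

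The key elementary observation for sensitivity is that, when $\varphi$ is injective, every quasi--periodic point is in fact periodic: if $\varphi^i(\alpha)=\varphi^j(\alpha)$ with $i>j\geq 1$, then cancelling the injective map $\varphi^j$ yields $\varphi^{i-j}(\alpha)=\alpha$. Hence, since $\varphi$ has no periodic point and $\Gamma\neq\varnothing$, every $\theta\in\Gamma$ is non--quasi--periodic; moreover each $\mathfrak{w}_{\varphi^n(\theta)}$ is invertible and therefore nonzero, so condition (c) of Corollary~\ref{salam-sen-30} is satisfied and $(R^\Gamma,\sigma_{\varphi,\mathfrak{w}})$ is (strongly) sensitive. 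Combining topological transitivity, density of periodic points, and sensitivity shows the system is Devaney chaotic, closing the cycle (Devaney) $\Rightarrow$ (transitive) $\Leftrightarrow$ (third) $\Rightarrow$ (Devaney). I do not anticipate a genuine obstacle here: the argument is an assembly of the earlier lemmas, and the only point needing a moment of care is the remark that injectivity of $\varphi$ turns quasi--periodicity into genuine periodicity, which is exactly what lets the ``no periodic points'' hypothesis be fed into the sensitivity criterion of Corollary~\ref{salam-sen-30}.
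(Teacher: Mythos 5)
Your proposal is correct and follows essentially the same route as the paper, whose proof is simply the instruction to combine Theorem~\ref{salam-dense-10}, Lemma~\ref{salam-tran-10} and Lemma~\ref{salam-sen-20}; you have merely written out the assembly explicitly, including the one detail the paper leaves implicit (that injectivity of $\varphi$ together with $Per(\varphi)=\varnothing$ forces every point of $\Gamma$ to be non--quasi--periodic, so the sensitivity criterion applies). The only cosmetic difference is that you invoke Corollary~\ref{salam-sen-30} where the paper cites Lemma~\ref{salam-sen-20} directly, which is immaterial since the weights are assumed invertible.
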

\begin{proof}
Use Theorem~\ref{salam-dense-10}, Lemma~\ref{salam-tran-10} and Lemma~\ref{salam-sen-20}.
\end{proof}
\begin{note}
One may obtain Theorem~\ref{salam-tran 20} by Theorem~\ref{salam-dense-10}, Lemma~\ref{salam-tran-10} and~\cite[Theorem 4.7]{jaleb} too.
\end{note}
\subsection*{Acknowledgement} The fourth author was partly
supported by a grant from INSF (No. 95820471).

\noindent {\small {\bf Fatemah Ayatollah Zadeh Shirazi}, Faculty of
Mathematics, Statistics and Computer Science, College of Science,
University of Tehran, Enghelab Ave., Tehran, Iran \linebreak
({\it e-mail}: f.a.z.shirazi@ut.ac.ir, fatemah@khayam.ut.ac.ir)}
\\
{\small {\bf Elaheh Hakimi}, School of Mathematics, Statistics
and Computer Science, College of Science, University of Tehran,
Enghelab Ave., Tehran, Iran ({\it e-mail}:
elaheh.hakimi@gmail.com)}
\\
{\small {\bf Arezoo Hosseini},
Faculty of Mathematics, College of Science, Farhangian University, Pardis Nasibe--shahid sherafat, Enghelab Ave., Tehran, Iran
({\it e-mail}: a.hosseini@cfu.ac.ir)}
\\
{\small {\bf Reza Rezavand}, School of Mathematics, Statistics and
Computer Science, College of Science, University of Tehran,
Enghelab Ave., Tehran, Iran ({\it e-mail}: rezavand@ut.ac.ir)}


\begin{thebibliography}{99}
\bibitem{abraham} Ch. Abraham, G. Biau, B. Cadre, Chaotic properties of mappings on a probability space, Journal of Mathematical Analysis and Applications, 266 (2002), 420--431.

\bibitem{abad} F. Ayatollah Zadeh Shirazi, D. Dikranjan, Set--theoretical entropy: a tool to compute topological entropy, Arhangel’skii, Alexander V. (ed.) et al., Proceedings of the international conference on topology and its applications (ICTA 2011), Islamabad, Pakistan, July 4–10, 2011. Cambridge: Cambridge Scientific Publishers (2012) 11--32.

\bibitem{weight} F. Ayatollah Zadeh Shirazi, F. Ebrahimifar, R. Rezavand, Weighted generalized shift operators on $\ell^p$ spaces, Rendiconti del Circolo Matematico di Palermo Series 2 (2021), https://doi.org/10.1007/s12215-021-00616-8

\bibitem{note} F. Ayatollah Zadeh Shirazi, N. Karami Kabir, F. Heidari Ardi,    \textit{A Note on shift theory}, Mathematica Pannonica, Proceedings of ITES-2007, 19/2 (2008), 187--195.

\bibitem{fweight} F. Ayatollah Zadeh Shirazi, F. Kazemi Moghaddam, A Note on weighted generalized shifts, The 6th Seminar of Functional Analysis and Its Applications. 27-28 January 2021, University of Isfahan, Iran, 67--70.

\bibitem{gen-li} F. Ayatollah Zadeh Shirazi, J. Nazarian Sarkooh, Li--Yorke chaotic generalized shift dynamical systems, Caspian Journal of Mathematical Sciences, 3/2 (2014), 289--295.

\bibitem{dev} F. Ayatollah Zadeh Shirazi, J. Nazarian Sarkooh, B. Taherkhani, On Devaney chaotic generalized shift dynamical systems, Studia Scientiarum Mathematicarum Hungarica, 50, no. 4 (2013), 509--522.

\bibitem{calcul} I. Bhaumik, B. S. Choudhury, Dynamics of the generalized shift map,
Bulletin of the Calcutta Mathematical Society, 101, No. 5 (2009), 463--470.

\bibitem{ben} R. L. Benedetto, Preperiodic points of polynomials over global fields,
J. Reine Angew. Math., 608 (2007), 123--153.

\bibitem{casa} M. Casartelli, The Contribution of A.N. Kolmogorov to the Notion of Entropy, In: Agazzi E. (eds) Probability in the Sciences. Synthese Library (Studies in Epistemology, Logic, Methodology, and Philosophy of Science), vol 201. Springer, Dordrecht, 1988.

\bibitem{baz} T. Ceccherini--Silberstein, M. Coornaert, Sensitivity and devaney’s chaos in uniform spaces, Journal of Dynamical and Control Systems, vol.  19 (2013), 349--357.

\bibitem{de} R. L. Devaney, An introduction to chaotic dynamical systems, Addison--Wesley, 1989.

\bibitem{dug} J. Dugundji, Topology, Allyn and Bacon Inc., 1966.

\bibitem{anna-b}  A. Giordano Bruno, Algebraic entropy of generalized shifts on direct products,
Commun. Algebra 38, No. 11 (2010), 4155--4174.

\bibitem{fed} A. Fedeli, Topologically sensitive dynamical systems, topology and its Applications, 248 (2018), 192--203.

\bibitem{comp} J.--Sh. Jeang, N.--Ch. Wong, Weighted composition operators of $C_0(X)$'s, Journal of Mathematical Analysis and Applications, 201 (1996), 981--993.

\bibitem{li-sen} S. F. Kolyada, Li--Yorke sensitivity and other concepts of chaos, Ukrainian Mathematical Journal, Vol 25, No. 8 (2004), 1242--1257.

\bibitem{li} T. Y. Li, J. A. Yorke, Period three implies chaos, Amer. Math. Mon., 82 (1975), 985--992.

\bibitem{Marchenko} V. A. Marchenko,
    The generalized shift, transformation operators, and inverse problems,
Bolibruch, A.A. (ed.) et al., Mathematical events of the twentieth century. Transl. from the Russian. Berlin: Springer; Moscow: PHASIS (ISBN 3-540-23235-4/hbk), (2006) 145-162.

\bibitem{large} L. Mi\u{s}ik, J. T. T\'{o}th, Large families of almost disjoint large subsets of $\mathbb N$, Acta Univ. Sapientiae Mathematica, 3/1 (2011), 26--23.

\bibitem{nathan} M. B. Nathanson, Piecewise linear functions with almost all points eventually periodic,
Proc. Am. Math. Soc. 60 (October 1976), 75--81.

\bibitem{ornstein} D. S. Ornstein, What Does it Mean for a Mechanical System to be Isomorphic to the Bernoulli Flow?, Dynamical Systems, Theory and Applications, J. Moser (ed.), Berlin, Heidelberg, New York: Springer, 1975.

\bibitem{khodam} M. Pourattar, F. Ayatollah Zadeh Shirazi, On Li--Yorke chaotic transformation groups modulo an ideal, Boletin de Mathematicas, 27(1) (2020), 25--42.

\bibitem{jaleb} F. M. Schneider, S. Kerkhoff, M. Behrisch, S. Siegmund, Chaotic actions of topological semigroups, Semigroup Forum, 87 (2013), 590--598.

\bibitem{walters} P. Walters, \textit{An introduction to ergodic theory}, Graduate Texts in Mathematics, 79, New York, Springer-Verlag, Inc, 1982.

\bibitem{chu} L. Wang, Y. Zhao, Z. Chu, Chaos for finitely generated semigroup actions, Journal of Nonlinear Sciences and
Applications, 10 (2017), 3843--3850.

\bibitem{wang} X. Wang,  Y. Huang, Devaney chaos revisited, Topology and its Applications, 160 (2013), 455--460.

\bibitem{wu} X. Wu, Y. Luo, X. Ma, T. Lu, Rigidity and sensitivity on uniform spaces, Topology and its Applications, 252 (2019), 145--157.
\end{thebibliography}
\end{document}